\newcommand{\CC}{{\mathbb C}}
\def\bege{\begin{equation}} \def\ende{\end{equation}}
\def\begr{\begin{eqnarray}} \def\endr{\end{eqnarray}}
\def\CC{ \mathbb{C}}
\def\T{\mathcal{T}}
\def\D{\mathbb{D}}
\def\N{\mathbb N}
\def\hD{\hat{\mathcal{D}}}
\def\dD{\mathcal{D}}
\def\vp{\varphi}
\def\om{\omega}
\def\p{{\prime}}
\def\begr{\begin{eqnarray}} \def\endr{\end{eqnarray}}
\def\ol{\overline}
\newtheorem{Lemma}{Lemma}
\newtheorem{Theorem}[Lemma]{Theorem}
\newtheorem{Remark}[Lemma]{Remark}
\newcounter{other}            
\newtheorem{otherth}[other]{Theorem}              
\newtheorem*{th2p}{Theorem \ref{th2}$^\p$}
\begin{document}
\title[]{Stevi\'c-Sharma type operators between Bergman spaces induced by doubling weights}

\author{ Juntao Du,  Songxiao Li$\dagger$  and Zuoling Liu}

\address{Juntao Du\\ Department of mathematics, Guangdong University of Petrochemical Technology, Maoming, Guangdong, 525000,  P. R. China.}
 \email{jtdu007@163.com  }

\address{Songxiao Li\\ Department of mathematics, Shantou University, Shantou, Guangdong, 515063,  P. R. China.}
\email{jyulsx@163.com}
\address{Zuoling Liu \\  Department of Mathematics, Shantou University, Shantou, Guangdong, 515063,  P. R. China }
\email{zlliustu@163.com, 16zlliu1@stu.edu.cn}

 \subjclass[2010]{30H20, 47B10, 47B35}
 \begin{abstract}
 Using Khinchin's inequality, Ger$\check{\mbox{s}}$gorin's theorem and the atomic decomposition of Bergman spaces,  we  estimate the norm and essential norm of Stevi\'c-Sharma type operators  from   weighted Bergman spaces $A_\omega^p$  to $A_\mu^q$ and the  sum of weighted differentiation composition operators with different symbols from   weighted Bergman spaces $A_\omega^p$  to  $H^\infty$.  The estimates of those between Bergman spaces remove all the restrictions of a  result in [Appl. Math. Comput.,  {\bf 217}(2011), 8115--8125].
As a by-product, we also get an  interpolation theorem for Bergman spaces induced by doubling weights.
 \thanks{$\dagger$ Corresponding author.}
 \thanks{The work was supported by NNSF of China (Nos. 12371131 and 12271328),  Guangdong Basic and Applied Basic Research Foundation (No. 2022A1515012117) and Projects of Talents Recruitment of GDUPT(No. 2022rcyj2008), Project of Science and Technology of Maoming (No. 2023417) and STU Scientific Research Initiation Grant (No. NTF23004).}
 \vskip 3mm \noindent{\it Keywords}: Stevi\'c-Sharma type operator; differentiation composition operator;  Bergman space; doubling weight.
\end{abstract}
 \maketitle

\section{Introduction}
Let $H(\D)$ denote  the space of all analytic functions in  the open unit disc $\D=\{z\in\CC:|z|<1\}$. For a nonnegative function $\om\in L^1([0,1])$, the extension to $\D$, defined by $\om(z)=\om(|z|)$ for all $z\in\D$, is called a radial weight. The set of doubling weights, denoted by $\hat{\mathcal{D}}$,  consists of all radial weights $\omega$ such that (see \cite{Pj2015})
 $$\hat{\omega}(r)\leq C \hat{\omega}\left(\frac{r+1}{2}\right)$$
for all $0\leq r <1$ and constant $C=C(\omega)\geq1$. Here  $\hat{\om}(z)=\int_{|z|}^1\om(t)dt $.
Moreover, if $\omega\in\hat{\mathcal{D}}$ and satisfies
\begin{align}\label{0405-1}
\hat{\omega}(r)\geq C\hat{\omega}\left(1-\frac{1-r}{K}\right)
\end{align}
for  all $0\leq r <1$ and some constants $K=K(\omega)>1$ and $C=C(\omega)>1$, we refer to $\omega$ as a two-sides doubling weight and denote it by $\omega\in\dD$. For any $\lambda\in\D$,  the  Carleson square at $\lambda\in\D$ is defined by
$$S(\lambda)=\left\{re^{\mathrm{i\theta}}:|\lambda|\leq r<1, |\mbox{Arg } \lambda-\theta|<\frac{1-|\lambda|}{2}\right\}.$$
For a radial weight $\om$, let $\om(S(\lambda))=\int_{S(\lambda)}\om(z)dA(z)$.
Obviously, $\om(S(\lambda))\approx (1-|\lambda|)\hat{\om}(\lambda)$.
See  \cite{Pj2015,  PjRj2021adv, PjRjSk2021jga} and references therein for more properties of doubling weight.

For $0<p<\infty$ and a given $\omega\in\hat{\mathcal{D}}$, the Bergman space $A_\omega^p$ with doubling weight consists of all  functions $f\in H(\D)$ such that
$$\|f\|_{A_\omega^p}^p=\int_\mathbb{D} |f(z)|^p\omega(z)dA(z)<\infty,$$
where $dA$ is the normalized area measure on $\mathbb{D}$.
As usual, we denote $A_\alpha^p$ as the standard weighted Bergman space induced by the radial weight $\omega(z)=(\alpha+1)(1 - |z|^2)^\alpha$ with $-1<\alpha<\infty$.
Throughout this paper, we assume that  $\hat{\om}(z) >0$ for all $z\in\D$. Otherwise $A_\om^p=H(\D)$. Let $H^\infty$ denote the bounded analytic function space, i.e.,
$$ H^\infty=\left\{f\in H(\D):\|f\|_{H^\infty}=\sup_{z\in\D}|f(z)|<\infty\right\}.$$

Let  $S(\D)$ be the set of all analytic self-maps of $\D$. For $n\in\N\cup\{0\},\vp\in S(\D)$, and $u\in H(\D)$, the generalized weighted composition operator $uD_\vp^{(n)}$ is defined by
$$uD_\vp^{(n)} f=u \left(f^{(n)}\circ \vp\right), \quad f\in H(\D). $$
The operator $uD_\vp^{(n)}$ was introduced by Zhu in \cite{ZXL2007}.  The generalized  weighted composition operator is also called a weighted differentiation composition operator (see \cite{st1, st2, st3}).  When $n=0$, $uD_\vp^{(n)}$ is the weighted composition operator $uC_\vp$. In particular, when $n=0$ and $u\equiv 1$,  $uD_\vp^{(n)}$ is the composition operator $C_\vp$. By using the pull-back measure, the first two  authors of this paper and Shi \cite{DjLsSy2020ms}  estimated the norm and essential norm of weighted composition operators between Bergman spaces induced by doubling weights. At the same time, Liu \cite{Lb2021bams} independently characterized the boundedness and compactness of  weighted differentiation composition operator $uD_\vp^{(n)}:A_\om^p\to L_\upsilon^q$ when $0<p,q<\infty$, $\om\in\dD$ and $\upsilon$ is a positive Borel measure on $\D$. For more discussion on composition operators and  weighted composition operators, we refer to \cite{CM1995,EK2023, GMR2023,SsSaBa2011amc,SU2014, Zhu1} and the references therein.  When $u\equiv 1$, $uD_\vp^{(n)}$ is the differentiation  composition operator $D_\vp^{(n)}$. When $u\equiv 1$ and $\vp(z)=z$, $uD_\vp^{(n)}$ is the $n$-th differentiation operator $D^{(n)}$.  So,  the generalized weighted composition operator   attracted a lot of attentions since it covers a lot of classical operators. See \cite{st1, st2, st3, ZXL2007, zxl2, zxl5, ZXL2019} for further information and results on generalized weighted composition operators on analytic function spaces.

     In 2011, Stevi\'c, Sharma and Bhat \cite{SsSaBa2011amc} introduced a operator $T_{u_0,u_1,\vp}$ as follows.
 $$T_{u_0,u_1,\vp}=u_0D_\vp^{(0)}+u_1 D_\vp^{(1)}.$$
 This operator and its extension $\sum_{k=0}^n u_k D_{\vp_k}^{(n)} $ are called Stevi\'c-Sharma type operators by some authors, where $\{u_k\}_{k=0}^n \subset H(\D)$ and $\{\vp_k\}_{k=0}^n\subset S(\D)$.  In \cite{SsSaBa2011amc},  Stevi\'c, Sharma and Bhat characterized the boundedness of $T_{u_0,u_1,\vp}:A_\alpha^p\to A_\alpha^p$ with the assumption
 \begin{align}\label{0914-1}
 u_0\in H^\infty\,\,\,\mbox{or}\,\,\,\sup_{z\in\D}\frac{|u_1(z)|}{1-|\vp(z)|^2}<\infty.
 \end{align}
Two natural questions are raised.
 \begin{enumerate}
   \item[{\bf Q1.}] Whether the condition (\ref{0914-1}) can be removed?
   \item[{\bf Q2.}] What about the operator $u_0 D_{\vp_0}^{(0)}+u_1D_{\vp_1}^{(1)}$ when $\vp_0\neq \vp_1$?
 \end{enumerate}
 See, for example,   \cite{FG2022,G2022f,WWG2020,WWG2020B, YyLy2015caot,ZfLy2018caot} for some investigations about these operators.

By using Khinchin's inequality, Ger$\check{\mbox{s}}$gorin's theorem, and the atomic decomposition of weighted Bergman spaces,
we give a positive answer to  question  {\bf Q1}. Moreover, we extend it to a more general case and  completely estimate the norm and essential norm of the operator
$$T_{n,\varphi,\vec{u}} =\sum_{k=0}^n u_k D_\vp^{(k)} $$
from  $A_\om^p$ to  $A_\mu^q$ with $\om,\mu\in\dD$ and $1\leq p,q<\infty$, where $\vec{u}=(u_0, u_1,\cdot\cdot\cdot, u_n)$ with $\{u_k\}_{k=0}^n\subset H(\D)$  and $\vp\in S(\D)$. The first  result of this paper is stated as follows.

\begin{Theorem}\label{th1} Suppose $1\leq p,q<\infty,\om,\mu\in\dD$, $\vp\in S(\D)$ and  $\{u_k\}_{k=0}^n\subset H(\D)$. Then,
$$\|T_{n,\varphi,\vec{u}}\|_{A_\om^p\to A_\mu^q}\approx \sum_{k=0}^n\|u_k D_\vp^{(k)}\|_{A_\om^p\to A_\mu^q}.$$
Moreover, if $T_{n,\varphi,\vec{u}}:A_\om^p\to A_\mu^q$ is bounded, then
$$\|T_{n,\varphi,\vec{u}}\|_{e,A_\om^p\to A_\mu^q}\approx \sum_{k=0}^n\|u_kD_\vp^{(k)}\|_{e,A_\om^p\to A_\mu^q}.$$
\end{Theorem}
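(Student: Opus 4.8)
The bound $\|T_{n,\varphi,\vec u}\|_{A_\om^p\to A_\mu^q}\le\sum_{k=0}^n\|u_kD_\vp^{(k)}\|_{A_\om^p\to A_\mu^q}$ is immediate from the triangle inequality, and since a finite sum of compact operators is compact, the same argument gives the corresponding upper bound for the essential norm. Thus the entire content of the theorem is the reverse inequality, and after summation it suffices to establish, for each fixed $k\in\{0,1,\dots,n\}$,
\[
\|u_kD_\vp^{(k)}\|_{A_\om^p\to A_\mu^q}\lesssim\|T_{n,\varphi,\vec u}\|_{A_\om^p\to A_\mu^q},
\]
together with the analogous estimate for essential norms. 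The plan is to feed the whole operator a well chosen finite family of normalized test functions and then to decouple the derivative orders by a linear-algebra inversion that is uniform in the test parameter.

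For $a\in\D$ away from a fixed compact subset of $\D$ (the remaining part being routine) and for $m=0,1,\dots,n$, I would take the normalized test functions
\[
f_{a,m}(z)=\frac{1}{\om(S(a))^{1/p}}\left(\frac{1-|a|^2}{1-\overline a\,z}\right)^{\gamma_m},
\]
with distinct exponents $\gamma_0<\gamma_1<\dots<\gamma_n$ chosen large enough that, using $\om\in\dD$ and the standard kernel estimates for doubling weights, $\|f_{a,m}\|_{A_\om^p}\approx1$ uniformly in $a$. A direct computation gives
\[
f_{a,m}^{(j)}(a)=\frac{(\gamma_m)_j\,\overline a^{\,j}}{\om(S(a))^{1/p}(1-|a|^2)^{j}},\qquad 0\le j\le n,
\]
where $(\gamma_m)_j$ denotes the Pochhammer symbol, a monic polynomial of degree $j$ in $\gamma_m$. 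The point is that the matrix $P=[(\gamma_m)_j]_{0\le j,m\le n}$ is the product of a unit lower-triangular matrix and a Vandermonde matrix in the distinct $\gamma_m$, hence invertible.

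To make this robust I would pass from the single point $a$ to a pseudohyperbolic disk $D(a,\delta)$ of fixed small radius. On such a disk $1-\overline a\,w\approx1-|a|^2$, so the rescaled interaction matrix $M(w)=\big[\om(S(a))^{1/p}(1-|a|^2)^{j}\,\overline a^{-j}f_{a,m}^{(j)}(w)\big]_{j,m}$ is a small perturbation of $P$ for $w\in D(a,\delta)$; after a harmless diagonal rescaling it becomes diagonally dominant, so Ger\v{s}gorin's theorem yields invertibility of $M(w)$ with $\|M(w)^{-1}\|$ bounded independently of $a$ and of $w\in D(a,\delta)$. Writing $v_j(z)=u_j(z)\,\overline a^{\,j}(1-|a|^2)^{-j}$, the identity $T_{n,\varphi,\vec u}f_{a,m}(z)=\sum_{j}u_j(z)f_{a,m}^{(j)}(\varphi(z))$ becomes, for $z$ with $\varphi(z)\in D(a,\delta)$, a linear system $\om(S(a))^{1/p}\big(T_{n,\varphi,\vec u}f_{a,m}(z)\big)_m=M(\varphi(z))\,(v_j(z))_j$, and inverting gives
\[
|u_k(z)|\,(1-|a|^2)^{-k}\lesssim\om(S(a))^{1/p}\sum_{m=0}^n\big|T_{n,\varphi,\vec u}f_{a,m}(z)\big|.
\]

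Raising this to the power $q$, integrating in $z$ over $\{z:\varphi(z)\in D(a,\delta)\}$ against $d\mu$, and using $\|f_{a,m}\|_{A_\om^p}\approx1$, I obtain
\[
\frac{1}{\om(S(a))^{q/p}(1-|a|^2)^{kq}}\int_{\{\varphi\in D(a,\delta)\}}|u_k|^q\,d\mu\lesssim\|T_{n,\varphi,\vec u}\|^q_{A_\om^p\to A_\mu^q}
\]
uniformly in $a$. The supremum over $a$ of the left-hand side is, by the atomic decomposition of $A_\om^p$ together with Khinchin's inequality (which supplies the matching lower bound by turning a randomly signed sum over a lattice into a cancellation-free square function), comparable to $\|u_kD_\vp^{(k)}\|^q_{A_\om^p\to A_\mu^q}$; this closes the operator-norm estimate. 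For the essential norm one repeats the construction with $|a|\to1$ and replaces $\sup_a$ by $\limsup_{|a|\to1}$, using that $f_{a,m}\to0$ weakly in $A_\om^p$ so that every compact operator is annihilated in the limit. The main obstacle throughout is the coupling between the derivative orders $0,1,\dots,n$: no single test function can isolate the $k$-th order, because the prescribed vanishing of the lower derivatives leaks away from the center of the disk. This is exactly what the uniform invertibility of $M(w)$ over the whole disk $D(a,\delta)$, furnished by Ger\v{s}gorin's theorem, is designed to overcome, allowing the $n+1$ test functions to jointly decouple the orders.
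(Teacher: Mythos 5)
Your overall engine---feeding $T_{n,\varphi,\vec{u}}$ the family $f_{a,m}=\om(S(a))^{-1/p}\bigl((1-|a|^2)/(1-\overline{a}z)\bigr)^{\gamma_m}$, $m=0,\dots,n$, and decoupling the derivative orders by inverting a uniformly nondegenerate $(n+1)\times(n+1)$ matrix controlled by Ger\v{s}gorin's theorem---is the same as the paper's (its Lemma \ref{0331-1}), but you deploy it differently. You invert \emph{pointwise} on $\varphi^{-1}(D(a,\delta))$ and extract every order $k$ simultaneously as a local Carleson-type estimate; the paper instead only ever extracts the order-zero term $u_0D_\vp^{(0)}$, from the global inequality $\|u_0D_\vp^{(0)}f_{\lambda,\gamma_n}\|_{A_\mu^q}^q\lesssim\sum_k|c_k|^q\gamma_k^{q/2-kq}\|T_{n,\varphi,\vec{u}}f_{\lambda,\gamma_k}\|_{A_\mu^q}^q$, and then recovers the higher orders by induction, using the Littlewood--Paley identity for $\dD$ weights (Theorem \ref{thA}, via Lemma \ref{0405-3} and Lemma \ref{0405-2}) to trade $\sum_{j\ge1}u_jD_\vp^{(j)}$ on $A_\om^p$ for a Stevi\'c--Sharma operator of lower order acting on $A_{\om_{[p]}}^p$. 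For $1\le p\le q<\infty$ your route is viable and arguably more direct: the quantity $\sup_a\nu_k(D(a,\delta))/\bigl(\om(S(a))^{q/p}(1-|a|^2)^{kq}\bigr)$, with $\nu_k(E)=\int_{\varphi^{-1}(E)}|u_k|^q\mu\,dA$, is indeed comparable to $\|u_kD_\vp^{(k)}\|_{A_\om^p\to A_\mu^q}^q$ by Remark \ref{0916-2} and Theorem \ref{thB}(i) together with a standard covering argument (valid because $q/p\ge1$), and the small-$|a|$ and essential-norm variants are routine in that range. The price is that you must invoke the full Carleson-measure characterization, which the paper's induction avoids.

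The genuine gap is the case $1\le q<p<\infty$, which the theorem also covers. There the finiteness of $\sup_a\nu_k(D(a,\delta))/\bigl(\om(S(a))^{q/p}(1-|a|^2)^{kq}\bigr)$ does \emph{not} imply boundedness of $u_kD_\vp^{(k)}:A_\om^p\to A_\mu^q$: by Theorem \ref{thB}(ii) the correct condition is the integral condition $\|M_{\om}(\nu_{u,\vp,q,\mu})\|_{L_\om^{p/(p-q)}}<\infty$ (with $\om$ replaced by $\om_{[kp]}$), which is strictly stronger than the supremum condition your final display produces. Your parenthetical appeal to Khinchin's inequality and atomic decomposition names the right tools, but they never enter your chain of inequalities: taking the supremum over $a$ discards exactly the lattice structure those tools require. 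What is needed---and what the paper does in its Case (b)---is to keep the whole lattice: test $T_{n,\varphi,\vec{u}}$ on the random sums $\sum_ia_ir_i(t)f_{\lambda_i,\gamma_k}$ over an atomic-decomposition lattice $\{\lambda_i\}$, apply Khinchin's inequality and the matrix inversion at the level of the resulting square function, and conclude from the atomic decomposition of $A_\om^{p/2}$ that $g\mapsto u_0^2\,(g\circ\vp)$ is bounded from $A_\om^{p/2}$ to $A_\mu^{q/2}$, which Theorem \ref{thB}(ii) then converts into the desired bound. Relatedly, for $q<p$ the essential-norm statement is not obtained by replacing $\sup_a$ with $\limsup_{|a|\to1}$; it holds because in that range boundedness already forces compactness of all the operators involved.
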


By Remark \ref{0916-2} and Theorem \ref{thB} in Section 2, Theorem 1 completely characterizes the norm and essential norm of $T_{n,\varphi,\vec{u}}:A_\om^p\to A_\mu^q$

Recall that the essential norm of a bounded operator $T:X\to Y$  is defined by
$$\|T\|_{e,X\to Y}=\inf\Big\{\|T-K\|_{X\to Y};K:X\to Y \mbox{ is compact}\Big\}.$$
Here $X$ and $Y$ are Banach spaces. Obviously, $T$ is compact if and only if $\|T\|_{e,X\to Y}=0$.

For the question   {\bf Q2},   Acharyya and Ferguson  \cite{AsFt2019caot} characterized the  compactness of the operator $\T_{n,\vec{\varphi},\vec{u}}:A_\alpha^p\to H^\infty$, where $$\T_{n,\vec{\varphi},\vec{u}}=\sum_{k=0}^n u_k D_{\vp_k}^{(k)} .$$ Here   $\vec{u}=(u_0, u_1,\cdot\cdot\cdot, u_n)$ with $\{u_k\}_{k=0}^n\subset H(\D)$  and $\vec{\vp}=(\vp_0, \vp_1,\cdot\cdot\cdot, \vp_n)$ with $\{\vp_k\}_{k=0}^n\subset S(\D)$.    In this paper, we  extend \cite[Theorem 2]{AsFt2019caot} to the case of Bergman spaces $A_\om^p$ with $\om\in\hD$.

\begin{Theorem}\label{th2}  Suppose $n\in\N\cup\{0\}$, $1\leq p<\infty$, $\om\in\hD$, $\{u_k\}_{k=0}^n\subset H(\D)$ and $\{\vp_k\}_{k=0}^n\subset S(\D)$.
Then,   $$\|\T_{n,\vec{\varphi},\vec{u}}\|_{A_\om^p\to H^\infty}\approx \sum\limits_{k=0}^n \sup\limits_{z\in\D} \frac{|u_k(z)|}{(1-|\vp_k(z)|^2)^k\om(S(\vp_k(z)))^\frac{1}{p}}.$$
Moreover, if $\T_{n,\vec{\varphi},\vec{u}}:{A_\om^p\to H^\infty}$  is bounded, then
  $$\|\T_{n,\vec{\varphi},\vec{u}}\|_{e,A_\om^p\to H^\infty}\approx \sum\limits_{k=0}^n \limsup\limits_{|\vp_k(z)|\to 1} \frac{|u_k(z)|}{(1-|\vp_k(z)|^2)^k\om(S(\vp_k(z)))^\frac{1}{p}}.$$
\end{Theorem}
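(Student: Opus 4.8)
The plan is to prove both the norm estimate and the essential norm estimate for $\T_{n,\vec{\varphi},\vec{u}}:A_\om^p\to H^\infty$, and I would handle the lower and upper bounds separately.

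For the norm estimate, the \emph{lower bound} is the part where the main conceptual work lies. The idea is to fix a point $z_0\in\D$ and construct suitable test functions $f_w\in A_\om^p$ concentrated near $w=\vp_k(z_0)$, whose $A_\om^p$-norms are uniformly bounded and whose derivatives isolate a single term in the sum. A natural candidate is a normalized reproducing-kernel-type function
$$f_w(z)=\frac{(1-|w|^2)^{c}}{(1-\overline{w}z)^{b}}\cdot\frac{1}{\om(S(w))^{1/p}},$$
with exponents $b,c$ chosen so that $\|f_w\|_{A_\om^p}\lesssim 1$ and so that the $k$-th derivative $f_w^{(k)}(w)$ is comparable to $(1-|w|^2)^{-k}\om(S(w))^{-1/p}$ while lower-order derivatives are controlled. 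Since $\om\in\hD$ is only a doubling weight, the correct normalization of such test functions, and the estimate $\|f_w\|_{A_\om^p}^p\approx\om(S(w))$ for the un-normalized kernel, must be taken from the doubling-weight machinery (the $A_\om^p$ norm of powers of $(1-\overline wz)^{-1}$ in terms of $\om(S(w))$). Evaluating $\T_{n,\vec{\varphi},\vec{u}}f_w$ at the point $z_0$ where $\vp_k(z_0)=w$ and using that $\|\cdot\|_{H^\infty}$ is a supremum then gives a lower bound by each individual term $\frac{|u_k(z)|}{(1-|\vp_k(z)|^2)^k\om(S(\vp_k(z)))^{1/p}}$; taking the supremum over $z$ and summing over $k$ yields the full lower bound. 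The delicate point is that for mixed symbols $\vp_0,\dots,\vp_n$ one cannot separate the terms by a single test function, so one must arrange the exponents so that a single $f_w$ peaks the $k$-th term while the contributions of the other derivatives of $f_w$ at $w$ are negligible or absorbed; this forces a careful choice of a finite family of test functions (one adapted to each $k$) and an orthogonality/triangle-inequality argument.

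For the \emph{upper bound}, I would use the pointwise estimate for functions in $A_\om^p$: for every $f\in A_\om^p$ and every $w\in\D$,
$$|f^{(k)}(w)|\lesssim \frac{\|f\|_{A_\om^p}}{(1-|w|^2)^k\,\om(S(w))^{1/p}},$$
which again comes from the doubling-weight theory (differentiating the reproducing/pointwise growth estimate $|f(w)|\lesssim\|f\|_{A_\om^p}\om(S(w))^{-1/p}$ and using subharmonicity or Cauchy-type estimates on hyperbolic discs). Applying this at $w=\vp_k(z)$ gives, for each $z\in\D$,
$$|(\T_{n,\vec{\varphi},\vec{u}}f)(z)|\le\sum_{k=0}^n|u_k(z)|\,|f^{(k)}(\vp_k(z))|\lesssim\|f\|_{A_\om^p}\sum_{k=0}^n\frac{|u_k(z)|}{(1-|\vp_k(z)|^2)^k\om(S(\vp_k(z)))^{1/p}},$$
and taking the supremum over $z$ yields the upper bound directly. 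This direction is essentially routine once the pointwise derivative estimate is in hand.

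For the \emph{essential norm}, I would follow the standard two-sided scheme. For the upper bound, I would approximate $f$ by its dilates $f_r(z)=f(rz)$ (or truncations), show that $\T_{n,\vec{\varphi},\vec{u}}$ composed with the compact approximation captures the behavior on the compact set $\{|\vp_k(z)|\le \rho\}$, and estimate the tail where $|\vp_k(z)|>\rho$ by the same pointwise bound to produce the $\limsup_{|\vp_k(z)|\to1}$ quantity; letting $\rho\to1$ gives $\|\T_{n,\vec{\varphi},\vec{u}}\|_e\lesssim\sum_k\limsup_{|\vp_k(z)|\to1}(\cdots)$. For the lower bound, I would take, for each $k$, a sequence of points $z_j$ with $|\vp_k(z_j)|\to1$ nearly attaining the $\limsup$, feed in the corresponding normalized test functions $f_{\vp_k(z_j)}$ from the norm argument, note that these converge weakly to $0$ in $A_\om^p$ (since they concentrate at the boundary), and use that any compact $K$ satisfies $\|Kf_{\vp_k(z_j)}\|_{H^\infty}\to0$, so that $\|\T_{n,\vec{\varphi},\vec{u}}-K\|\gtrsim\limsup_j\|\T_{n,\vec{\varphi},\vec{u}}f_{\vp_k(z_j)}\|_{H^\infty}$ reproduces each boundary term. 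The main obstacle throughout is the same as in the norm lower bound: designing test functions adapted to doubling weights that simultaneously isolate the $k$-th derivative term and have controlled $A_\om^p$-norm; the weak-null behavior and the separation of the $n+1$ terms under distinct symbols $\vp_k$ are the technical crux, and I expect the doubling-weight norm identities for kernel powers to be the key lemmas that make the construction go through.
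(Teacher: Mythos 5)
Your upper bounds (both for the norm, via the pointwise estimate $|f^{(k)}(w)|\lesssim \|f\|_{A_\om^p}(1-|w|^2)^{-k}\om(S(w))^{-1/p}$, and for the essential norm, via the dilations $K_rf(z)=f(rz)$ and a split of the supremum over $\{|\vp_k(z)|\le\delta\}$ and $\{|\vp_k(z)|>\delta\}$) match the paper's argument and are fine. The gap is in the lower bounds, and it is exactly at the point you flag as ``delicate'' and then do not resolve. A single normalized kernel $f_w$ centered at $w=\vp_k(z_0)$ does not isolate the $k$-th term: for $j\ne k$ the contribution $u_j(z_0)f_w^{(j)}(\vp_j(z_0))$ is of the same order as the target term whenever $\vp_j(z_0)$ is hyperbolically close to $w$, and since one only controls the norm of the sum $\T_{n,\vec{\varphi},\vec{u}}$ (not of the individual summands $u_jD_{\vp_j}^{(j)}$), cancellation among the terms can destroy the lower bound. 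A triangle-inequality or vague ``orthogonality'' argument cannot separate the terms from an upper bound on their sum alone.

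The paper closes this gap with a dedicated interpolation theorem (Theorem \ref{th3}): for every tuple $\Lambda=\{\lambda_j\}_{j=0}^n$ and every index $J$ there is a \emph{single} function $f_{\Lambda,J}$ with $\|f_{\Lambda,J}\|_{A_\om^p}\le C$, $f_{\Lambda,J}^{(j)}(\lambda_j)=\delta_{jJ}(1-|\lambda_j|^2)^{-j}\om(S(\lambda_j))^{-1/p}$ for all $j$, and $f_{\Lambda,J}\to0$ uniformly on compacta as $|\lambda_J|\to1$. Applied with $\lambda_j=\vp_j(\lambda)$, this makes the unwanted derivatives exactly zero rather than ``negligible,'' and the lower bounds for both the norm and the essential norm follow in one line. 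The proof of that theorem is the real work: for hyperbolically clustered points one takes a linear combination of kernels $(1-|\lambda_k|^2)^{\gamma_k}(1-\ol{\lambda_k}z)^{-\gamma_k}\om(S(\lambda_k))^{-1/p}$ with carefully separated exponents $\gamma_k$ (Lemma \ref{0331-1}) and inverts the resulting linear system by Ger\v{s}gorin diagonal dominance; for separated points one uses a pigeonhole splitting and multiplies by a uniformly bounded analytic function vanishing to order $n$ at the far points (Lemma \ref{lm3.5}); interior points are handled by polynomial interpolation. None of these mechanisms appears in your proposal, so as written the lower-bound halves of both estimates are not established.
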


By Theorem \ref{th2}, it is easy to check that
$$\|u_kD_{\vp_k}^{(k)}\|_{A_\om^p\to H^\infty} \approx \sup_{z\in\D} \frac{|u_k(z)|}{(1-|\vp_k(z)|^2)^k\om(S(\vp_k(z)))^\frac{1}{p}}$$
and
$$\|u_kD_{\vp_k}^{(k)}\|_{e,A_\om^p\to H^\infty} \approx \limsup_{|\vp_k(z)|\to 1} \frac{|u_k(z)|}{(1-|\vp_k(z)|^2)^k\om(S(\vp_k(z)))^\frac{1}{p}}.$$
Therefore, Theorem \ref{th2} can be stated similarly as Theorem \ref{th1}.

\begin{th2p} Suppose $n\in\N\cup\{0\}$, $1\leq p<\infty$, $\om\in\hD$, $\{u_k\}_{k=0}^n\subset H(\D)$ and $\{\vp_k\}_{k=0}^n\subset S(\D)$.
Then,
  $$\|\T_{n,\vec{\varphi},\vec{u}}\|_{A_\om^p\to H^\infty}\approx \sum\limits_{k=0}^n \|u_kD_{\vp_k}^{(k)}\|_{A_\om^p\to H^\infty}.$$
Moreover, if $\T_{n,\vec{\varphi},\vec{u}}:{A_\om^p\to H^\infty}$  is bounded, then
  $$\|\T_{n,\vec{\varphi},\vec{u}}\|_{e,A_\om^p\to H^\infty}\approx  \sum\limits_{k=0}^n \|u_kD_{\vp_k}^{(k)}\|_{e,A_\om^p\to H^\infty}.$$
\end{th2p}

The sufficiency of  Theorems \ref{th1} and \ref{th2} are easy to verify.  To establish the necessity part, we need the following interpolation theorem, which has its own interesting.
Here and henceforth,   $\delta_{ij}$  is the Dirac function, that is,
 $\delta_{ij}=1$ when $i=j$ and $\delta_{ij}=0$ when $i\neq j$.

\begin{Theorem}\label{th3}  Let $1\leq p<\infty$, $n\in\N\cup\{0\}$, $\om\in \hD$. Then there is a positive constant $C$ such that  for all $\Lambda=\{\lambda_j\}_{j=0}^n\subset\D$ and $J\in\{0,1,\cdots,n\}$, there exists $f_{\Lambda,J}\in A_\om^p$ satisfying $\|f_{\Lambda,J}\|_{A_\om^p}\leq C$   and
\begin{align}\label{IC}
f_{\Lambda,J}^{(j)}(\lambda_j)=\frac{\delta_{jJ}}{(1-|\lambda_j|^2)^j\om(S(\lambda_j))^\frac{1}{p}},\,\,j=0,1,\cdots,n.
\end{align}
Moreover, if $J$ is fixed, the functions $\{f_{\Lambda,J}\}$ converge to 0 uniformly on compact subsets of $\D$ as $|\lambda_J|\to 1$.
\end{Theorem}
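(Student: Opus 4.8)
My plan is to construct $f_{\Lambda,J}$ explicitly as a short linear combination of normalized reproducing-type test functions, one adapted to each node $\lambda_j$, and then to solve a small linear system so that the prescribed data in \eqref{IC} hold exactly. For $a\in\D$ write $\varphi_a(z)=\frac{a-z}{1-\overline{a}z}$ and, for a parameter $\gamma$ to be fixed (depending only on $p,n,\om$), set
$$g_i(z)=\frac{(-1)^i}{i!\,\om(S(\lambda_i))^{1/p}}\left(\frac{1-|\lambda_i|^2}{1-\overline{\lambda_i}z}\right)^{\gamma}\varphi_{\lambda_i}(z)^{\,i},\qquad i=0,1,\dots,n.$$
Using the standard estimate for doubling weights (see \cite{Pj2015,PjRj2021adv}) that $\big\|(\tfrac{1-|a|^2}{1-\overline{a}z})^{\gamma}\big\|_{A_\om^p}^p\approx\om(S(a))$ once $\gamma p$ exceeds the doubling exponent of $\om$, together with $|\varphi_{\lambda_i}|\le1$, each $g_i$ satisfies $\|g_i\|_{A_\om^p}\lesssim 1$ uniformly. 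Since $\varphi_{\lambda_i}^{\,i}$ vanishes to order $i$ at $\lambda_i$ and $\varphi_{\lambda_i}'(\lambda_i)=-1/(1-|\lambda_i|^2)$, a Leibniz computation gives $g_i^{(m)}(\lambda_i)=0$ for $m<i$ and $g_i^{(i)}(\lambda_i)=\frac{1}{(1-|\lambda_i|^2)^{i}\om(S(\lambda_i))^{1/p}}$; thus each $g_i$ realizes, at its own node, exactly the normalizing functional on the right of \eqref{IC}.

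I would then look for $f_{\Lambda,J}=\sum_{i=0}^n b_i\,g_i$. Imposing \eqref{IC} and dividing the $j$-th equation by its natural scale $(1-|\lambda_j|^2)^{j}\om(S(\lambda_j))^{1/p}$ turns the problem into $N\vec b=\vec e_J$, where $N_{ji}=g_i^{(j)}(\lambda_j)(1-|\lambda_j|^2)^j\om(S(\lambda_j))^{1/p}$. By the previous paragraph $N$ has unit diagonal, $N_{jj}=1$, and $N_{ji}=0$ whenever $\lambda_i=\lambda_j$ and $j<i$; so on any cluster of coincident nodes $N$ restricts to a unipotent triangular block. The plan is to show $N$ is invertible with $\|N^{-1}\|\lesssim1$ uniformly in $\Lambda$ and $J$, whence $|b_i|\lesssim1$ and, by Minkowski's inequality (here $1\le p$ is used), $\|f_{\Lambda,J}\|_{A_\om^p}\le\sum_i|b_i|\,\|g_i\|_{A_\om^p}\lesssim1=:C$.

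The heart of the proof, and the step I expect to be hardest, is this uniform invertibility. The off-diagonal entry $N_{ji}$ measures how strongly the block adapted to $\lambda_i$ feels the $j$-th functional at $\lambda_j$, and it must be bounded uniformly over \emph{all} node configurations — including tight clusters and, most dangerously, nearly collinear nodes marching toward the boundary, where $\big(\tfrac{1-|\lambda_i|^2}{1-\overline{\lambda_i}z}\big)^{\gamma}$ can exceed $1$ and be magnified by the power $\gamma$. I would estimate $N_{ji}$ by splitting on the pseudohyperbolic distance $\rho(\lambda_i,\lambda_j)$: for separated pairs the factor $\big|\tfrac{1-|\lambda_i|^2}{1-\overline{\lambda_i}\lambda_j}\big|^{\gamma}$ together with the doubling control of the ratio $\om(S(\lambda_j))/\om(S(\lambda_i))$ forces smallness (and one may additionally insert Blaschke factors $\varphi_{\lambda_j}$ to annihilate far conditions outright), while for clustered pairs the order-$i$ vanishing of $\varphi_{\lambda_i}^{\,i}$ supplies the Taylor gain $\rho(\lambda_i,\lambda_j)^{\,i-j}$. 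The delicate point is that the admissible exponent $\gamma$ must be large enough to guarantee the norm normalization and the boundary concentration needed below, yet controlled enough that these off-diagonal bounds stay under the threshold making $N$ a small, near-triangular perturbation of a unit triangular matrix; a Ger\v{s}gorin/Neumann argument then yields $\|N^{-1}\|\lesssim1$. Pinning down a single exponent serving all these purposes for an arbitrary doubling weight is the main technical labor.

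Finally, for fixed $J$ and $|\lambda_J|\to1$, on a compact $K\subset\D$ one has $|g_J|\lesssim(1-|\lambda_J|)^{\gamma}\om(S(\lambda_J))^{-1/p}$, which tends to $0$ once $\gamma$ exceeds $(1+\alpha_0)/p$, where $\alpha_0$ is the polynomial lower-growth exponent of $\hat{\om}$ forced by the doubling condition; the remaining terms $b_i g_i$ either vanish on $K$ for the same reason or have $b_i\to0$ because the couplings $N_{Ji},N_{iJ}\to0$ as $\lambda_J$ separates from the interior nodes, so the system decouples and $\vec b\to\vec e_J$. This yields $f_{\Lambda,J}\to0$ uniformly on $K$ and completes the proof.
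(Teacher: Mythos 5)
Your overall architecture --- kernel-type test functions adapted to each node, a small linear system $N\vec b=\vec e_J$, uniform invertibility of $N$, and a separate device to annihilate far-away nodes --- is the same as the paper's. But the step you yourself flag as the heart of the matter, the uniform bound $\|N^{-1}\|\lesssim 1$, is not just unproved: with the specific functions you propose it is \emph{false}. Your matrix is not a small perturbation of a unit triangular matrix, because the below-diagonal entries are large. Take $n=1$ and $\lambda_0,\lambda_1$ near the boundary with small pseudohyperbolic distance $\rho=\rho(\lambda_0,\lambda_1)$. Then $N_{00}=N_{11}=1$, while $|N_{01}|=|g_1(\lambda_0)|\,\om(S(\lambda_0))^{1/p}\approx\rho$ (the Taylor gain from the factor $\varphi_{\lambda_1}$) and $|N_{10}|=|g_0'(\lambda_1)|(1-|\lambda_1|^2)\om(S(\lambda_1))^{1/p}\approx\gamma$ (differentiating $(1-\overline{\lambda_0}z)^{-\gamma}$ produces the factor $\gamma$). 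Hence $\det N=1-N_{01}N_{10}$ with $|N_{01}N_{10}|\approx\gamma\rho$; choosing $\rho\approx 1/\gamma$ and rotating $\lambda_0$ pseudohyperbolically around $\lambda_1$ sweeps the phase of $N_{01}N_{10}$ through a full circle, so $|\det N|$ becomes arbitrarily small over admissible configurations. No single exponent $\gamma$ escapes this: the product of an order-$\gamma^{j-i}$ below-diagonal entry with an order-$\rho^{i-j}$ above-diagonal entry is of order one precisely in the regime $\rho\sim 1/\gamma$, so the Ger\v{s}gorin/Neumann argument you invoke cannot close.

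The paper's fix is to abandon the single exponent and the Blaschke factors, and instead take $f_{\Lambda,J}=\sum_k b_k\gamma_k^{\frac12-k}f_{\lambda_k,\gamma_k}$ with a \emph{strictly increasing} sequence $\gamma_0<\cdots<\gamma_n$ chosen (Lemma \ref{0331-1}) so that $\gamma_j^{\frac12-j}(\gamma_j)_j$ strictly dominates $M+M^2\sum_{k\ne j}\gamma_k^{\frac12-k}(\gamma_k)_j$; the weight $\gamma_k^{\frac12-k}$ makes the resulting matrix genuinely row-diagonally dominant for \emph{clustered} nodes, and Ger\v{s}gorin then gives $|\det A|>1$ with bounded entries, hence bounded $b_k$. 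This only works when all $\lambda_j$ lie in a fixed hyperbolic ball around $\lambda_J$; the general case is reduced to it by a pigeonhole argument that finds a gap in the hyperbolic distances, replaces the far nodes by $\lambda_J$, and multiplies by the $H^\infty$ function of Lemma \ref{lm3.5} whose derivatives up to order $n$ equal $\delta_{0k}$ at the near nodes and vanish at the far ones --- note that merely "inserting Blaschke factors," as you suggest, would perturb the derivative conditions at the near nodes through the product rule. A third case ($|\lambda_J|$ bounded away from $1$) is handled by polynomial interpolation (Lemma \ref{lm3.2}). Your argument for the "moreover" part also needs the clustered-case reduction: when all nodes march to the boundary together the system does not decouple and $\vec b\not\to\vec e_J$; instead one uses that every $\lambda_k$ is hyperbolically close to $\lambda_J$, so all the building blocks tend to $0$ on compacta while the $b_k$ stay merely bounded.
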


The rest of this paper is organized as follows. In Section 2, we will gather the necessary preliminaries. Sections 3, 4, and 5 are dedicated to the proofs of Theorem \ref{th1}, Theorem \ref{th3}, and Theorem \ref{th2}, respectively.

Throughout this paper, the letter $C$ will represent constants, which may vary from one occurrence to another. For two positive functions $f$ and $g$, we use the notation $f \lesssim g$ to denote that there exists a positive constant $C$, independent of the arguments, such that $f \leq Cg$. Similarly, $f \approx g$ indicates that $f \lesssim g$ and $g \lesssim f$.\\

\section{preliminaries}
In this section, we state some lemmas which  will be used in the proof of main results of this paper.
For brief, for any given $\alpha>0$, let
$$\om_{[\alpha]}(z)=(1-|z|^2)^\alpha\om(z),\,\,\,\,z\in\D.$$

\begin{Lemma}\label{0405-2}  Suppose  $\alpha>0, \om\in\dD$. Then $\om_{[\alpha]}\in\dD$ and $\widehat{\om_{[\alpha]}}\approx \hat{\om}_{[\alpha]}$.
\end{Lemma}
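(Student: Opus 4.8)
The plan is to prove the two equivalences $\omega_{[\alpha]}\in\dD$ and $\widehat{\omega_{[\alpha]}}\approx\hat\omega_{[\alpha]}$ separately, obtaining the asymptotic identity first since it feeds the doubling estimates.

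For the asymptotic $\widehat{\omega_{[\alpha]}}\approx\hat\omega_{[\alpha]}$, recall $\hat\omega_{[\alpha]}(r)=(1-r^2)^\alpha\hat\omega(r)$ while $\widehat{\omega_{[\alpha]}}(r)=\int_r^1(1-t^2)^\alpha\omega(t)\,dt$.
I would first establish the upper bound $\widehat{\omega_{[\alpha]}}(r)\le(1-r^2)^\alpha\hat\omega(r)$, which is immediate because $(1-t^2)^\alpha\le(1-r^2)^\alpha$ for $t\ge r$ when $\alpha>0$.
For the reverse inequality the key device is the \emph{doubling} hypothesis $\omega\in\hD$, which supplies the decay control $\hat\omega(r)\lesssim\hat\omega\!\left(\tfrac{1+r}{2}\right)$; iterating this and integrating, one shows that a definite fraction of the mass of $\hat\omega(r)$ already lives on an interval $[r,s]$ with $1-s\approx 1-r$, on which $(1-t^2)^\alpha\approx(1-r^2)^\alpha$, so that
$$
\widehat{\omega_{[\alpha]}}(r)=\int_r^1(1-t^2)^\alpha\omega(t)\,dt\gtrsim (1-r^2)^\alpha\int_r^s\omega(t)\,dt\gtrsim(1-r^2)^\alpha\hat\omega(r).
$$
This is the step I expect to be the main obstacle, since it is precisely where the doubling property must be converted into a lower bound, and one has to choose the splitting point $s$ carefully so that both $1-s\approx 1-r$ and $\int_r^s\omega\gtrsim\hat\omega(r)$ hold simultaneously.

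Granting the asymptotic identity, the membership $\omega_{[\alpha]}\in\dD$ follows by transferring the two defining inequalities from $\omega$ to $\omega_{[\alpha]}$. For the upper doubling bound I would write
$$
\widehat{\omega_{[\alpha]}}(r)\approx(1-r^2)^\alpha\hat\omega(r)\lesssim(1-r^2)^\alpha\hat\omega\!\left(\tfrac{1+r}{2}\right)\approx\widehat{\omega_{[\alpha]}}\!\left(\tfrac{1+r}{2}\right),
$$
using $\omega\in\hD$ in the middle step together with the elementary comparison $(1-r^2)^\alpha\approx\bigl(1-(\tfrac{1+r}{2})^2\bigr)^\alpha$. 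The two-sided condition \eqref{0405-1} transfers identically: from $\hat\omega(r)\gtrsim\hat\omega\!\left(1-\tfrac{1-r}{K}\right)$ and the same power comparison $(1-r^2)^\alpha\approx\bigl(1-(1-\tfrac{1-r}{K})^2\bigr)^\alpha$ one reads off $\widehat{\omega_{[\alpha]}}(r)\gtrsim\widehat{\omega_{[\alpha]}}\!\left(1-\tfrac{1-r}{K}\right)$, so $\omega_{[\alpha]}$ inherits both sides of the doubling condition and lies in $\dD$.

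The only arithmetic lemmas needed throughout are the pointwise comparisons of $(1-t^2)^\alpha$ across intervals of comparable length to $1-r$; these are routine and I would record them once at the start. In summary, the two assertions are not independent: the comparison $\widehat{\omega_{[\alpha]}}\approx\hat\omega_{[\alpha]}$ does the real work, after which membership in $\dD$ is a formal consequence of substituting this comparison into each half of the doubling definition for $\omega$.
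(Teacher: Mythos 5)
Your overall architecture coincides with the paper's: prove $\widehat{\om_{[\alpha]}}(r)\approx(1-r)^\alpha\hat{\om}(r)$ first, then transfer the doubling conditions. But there is a genuine error at exactly the step you flag as the main obstacle. You claim the lower bound $\widehat{\om_{[\alpha]}}(r)\gtrsim(1-r)^\alpha\hat\om(r)$ follows from the hypothesis $\om\in\hD$ via the inequality $\hat\om(r)\lesssim\hat\om\bigl(\tfrac{1+r}{2}\bigr)$. That inequality points in the wrong direction: it says the tail of $\om$ does \emph{not} decay too fast, whereas what you need --- ``a definite fraction of the mass of $\hat\om(r)$ lives on $[r,s]$ with $1-s\approx 1-r$'' --- is the statement $\hat\om(s)\leq c\,\hat\om(r)$ with $c<1$, i.e.\ that the tail \emph{does} decay by a definite factor over a hyperbolic step. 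No amount of iterating the upper doubling inequality produces this; indeed the claim is false for general $\om\in\hD$ (e.g.\ $\om(r)=\exp(-\tfrac{1}{1-r})$ lies in $\hD$, yet $\hat\om(r)-\hat\om\bigl(1-\tfrac{1-r}{K}\bigr)$ is a vanishing fraction of $\hat\om(r)$ as $r\to1$, so $\widehat{\om_{[\alpha]}}(r)=o\bigl((1-r)^\alpha\hat\om(r)\bigr)$). The correct tool is the reverse inequality \eqref{0405-1}, which is the extra half of the hypothesis $\om\in\dD$: taking $s=1-\tfrac{1-r}{K}$ gives $\int_r^s\om=\hat\om(r)-\hat\om(s)\geq(1-\tfrac1C)\hat\om(r)$ with $1-s\approx1-r$, which is precisely how the paper argues. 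The fix is one line, but as written your proof invokes a hypothesis that cannot deliver the estimate.

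Two smaller points. First, in transferring \eqref{0405-1} to $\om_{[\alpha]}$ you conclude only $\widehat{\om_{[\alpha]}}(r)\gtrsim\widehat{\om_{[\alpha]}}\bigl(1-\tfrac{1-r}{K}\bigr)$; since $\widehat{\om_{[\alpha]}}$ is decreasing this is vacuous unless the constant exceeds $1$, so you must either track the factor $K^\alpha$ gained from $(1-r)^\alpha/(1-s)^\alpha$ and, if necessary, iterate \eqref{0405-1} $m$ times until $C^mK^{m\alpha}$ beats the comparison constants, or argue as the paper does: having shown $\om_{[\alpha]}\in\hD$ and that $\widehat{\om_{[\alpha]}}(t)/(1-t)^\alpha\approx\hat\om(t)$ is essentially decreasing, membership in $\dD$ follows from Lemma B of Pel\'aez--Rosa, which avoids the constant-chasing entirely. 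Second, your upper bound and the transfer of the $\hD$ condition are fine and match the paper.
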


\begin{proof} For brief, let $\eta=\om_{[\alpha]}$.
Let $C$ and $K$ be those in (\ref{0405-1}). For all $t\in[0,1)$, we have
$\hat{\eta}(t)\lesssim (1-t)^{\alpha}\hat{\om}(t)$ and
$$\hat{\eta}(t)\geq \int_t^{1-\frac{1-t}{K}}(1-s^2)^{\alpha}\om(s)ds
\approx (1-t)^{\alpha}\left(\hat{\om}(t)-\hat{\om}\bigg(1-\frac{1-t}{K}\bigg) \right)\gtrsim (1-t)^{\alpha}\hat{\om}(t).$$
Then, we have $\hat{\eta}(t)\approx (1-t)^{\alpha}\hat{\om}(t)$ and then
$$\hat{\eta}(t)\lesssim (1-t)^\alpha \hat{\om}\bigg(\frac{1+t}{2}\bigg)\approx \hat{\eta}\bigg(\frac{1+t}{2}\bigg).$$
Thus, $\eta\in\hD$.
Since $\frac{\hat{\eta}(t)}{(1-t)^\alpha}$ is  essentially decreasing, by Lemma B in \cite{PjRe2022afm}, $\eta\in\dD$.
The proof is complete.
\end{proof}

In \cite{PjRj2021adv}, the authors characterized the Littlewood-Paley formula  on  Bergman spaces induced   radial weights.  For the benefit of readers, we state it as follows.

\begin{otherth}\label{thA} Let $\om$ be a radial weight, $0<p<\infty$ and $k\in\N$. Then, for all $f\in H(\D)$,
$$\int_\D |f(z)|^p\om(z)dA(z)\approx \sum_{j=0}^{k-1}|f^{(j)}(0)|^p+\int_\D |f^{(k)}(z)|^p(1-|z|^2)^{kp}\om(z)dA(z) $$
if and only if $\om\in\dD$.
\end{otherth}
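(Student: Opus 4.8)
The plan is to reduce the general statement to the first-order case $k=1$ and then to read the equivalence off the boundary behaviour of $\om$. For the sufficiency (the implication $\om\in\dD\Rightarrow$ equivalence), observe that by Lemma \ref{0405-2} the weights $\om_{[jp]}=(1-|z|^2)^{jp}\om$ all lie in $\dD$ and satisfy $(\om_{[jp]})_{[p]}=\om_{[(j+1)p]}$. Hence, once the first-order formula
$$\int_\D|g|^p\eta\,dA\approx|g(0)|^p+\int_\D|g'|^p(1-|z|^2)^p\eta\,dA\qquad(\eta\in\dD)$$
is established, applying it successively to $g=f,f',\dots,f^{(k-1)}$ with $\eta=\om,\om_{[p]},\dots,\om_{[(k-1)p]}$ telescopes to the $k$-th order formula of the theorem. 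So it suffices to treat $k=1$, where there are two inequalities to prove: the upper bound $\int_\D|f'|^p(1-|z|^2)^p\om\,dA\lesssim\|f\|_{A^p_\om}^p$ (the term $|f(0)|^p\le C\|f\|_{A^p_\om}^p$ being the pointwise estimate at the origin), and the lower bound $\|f\|_{A^p_\om}^p\lesssim|f(0)|^p+\int_\D|f'|^p(1-|z|^2)^p\om\,dA$.

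I would first dispose of the transparent model $p=2$, which proves the sufficiency in that case and exposes the mechanism behind the general result. Writing $f=\sum_n a_nz^n$ and using orthogonality of the monomials in $A^2_\om$, one has $\|f\|_{A^2_\om}^2\approx\sum_n|a_n|^2\om_{2n+1}$ and $\int_\D|f'|^2(1-|z|^2)^2\om\,dA\approx\sum_n n^2|a_n|^2(\om_{[2]})_{2n-1}$, where $\om_x=\int_0^1 r^x\om(r)\,dr$. Thus the first-order equivalence for $p=2$ is equivalent to the uniform moment comparison $\om_{2n+1}\approx n^2(\om_{[2]})_{2n-1}$, and by the moment calculus for doubling weights (\cite{Pj2015,PjRj2021adv}), which yields $\om_x\approx\hat{\om}(1-1/x)$ together with the corresponding estimate for $\om_{[2]}$ exactly under the two-sided condition, this comparison is valid if and only if $\om\in\dD$.

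The passage to general $p$ is the main obstacle, and here the naive radial estimates are genuinely insufficient: neither the Minkowski bound $M_p(r,f)\le|f(0)|+\int_0^r M_p(t,f')\,dt$ (with $M_p(r,\cdot)$ the $L^p$-mean over $|z|=r$) for the lower inequality, nor the crude subharmonic-plus-Fubini estimate for the upper inequality, survives, because each is controlled by a Bergman-disc average of $\om$ and hence by a pointwise regularity condition strictly stronger than doubling. I would instead run both inequalities through the harmonic-analytic machinery available for $\dD$-weights, built on the growth estimate $|f(z)|\lesssim\om(S(z))^{-1/p}\|f\|_{A^p_\om}$ and on the area-function (tent-space) and maximal-function descriptions of $A^p_\om$ from \cite{PjRj2021adv,PjRjSk2021jga}; the decisive technical point is that the relevant kernels and averages are comparable to $\om(S(\cdot))\approx(1-|z|)\hat{\om}(z)$ with constants governed precisely by \eqref{0405-1}, so that the integrated estimates close exactly when $\om\in\dD$. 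Verifying these comparisons, rather than any single inequality, is the technical heart of the argument.

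For the necessity I would test the equivalence on the monomials $f(z)=z^N$, for which the boundary terms $\sum_{j<k}|f^{(j)}(0)|^p$ vanish once $N\ge k$, reducing the two inequalities to the single moment comparison $\om_{Np+1}\approx N^{kp}(\om_{[kp]})_{(N-k)p+1}$ with constants independent of $N$. If $\om\notin\hD$, so that $\hat{\om}$ may decay arbitrarily fast, the upper inequality fails along a sequence $N\to\infty$; if the lower doubling condition \eqref{0405-1} fails, so that $\hat{\om}$ may decay arbitrarily slowly, a direct moment computation (as in the model $\hat{\om}(r)=1/\log\frac1{1-r}$, where $\om_{2n+1}\approx1/\log n$ while $n^2(\om_{[2]})_{2n-1}\approx1/(\log n)^2$) gives $\om_{Np+1}\gg N^{kp}(\om_{[kp]})_{(N-k)p+1}$, so the lower inequality fails. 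Hence the full equivalence can hold for all $f\in H(\D)$ only when $\om$ lies in both classes, that is, only when $\om\in\dD$, completing the characterization.
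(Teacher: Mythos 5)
A preliminary point: the paper contains no proof of this statement to compare against. Theorem A is quoted verbatim from \cite{PjRj2021adv} (``In \cite{PjRj2021adv}, the authors characterized the Littlewood--Paley formula\dots''), so your proposal has to stand as a self-contained argument, and it does not. The skeleton is right: the telescoping reduction to $k=1$ via Lemma \ref{0405-2} is correct, and so is the $p=2$ moment computation. But at the crux of sufficiency for general $p$ you write that you would ``run both inequalities through the harmonic-analytic machinery \dots from \cite{PjRj2021adv,PjRjSk2021jga}'' and that ``verifying these comparisons \dots is the technical heart of the argument.'' That heart is never supplied: you defer the decisive step to the very source whose theorem is being proved, which is circular as a blind proof, and no concrete estimate is ever derived from \eqref{0405-1}. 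Moreover, your stated reason for abandoning the elementary route is wrong on one side. The upper bound $\int_\D|f^{(k)}|^p(1-|z|^2)^{kp}\om\,dA\lesssim\|f\|_{A_\om^p}^p$ follows from the Cauchy estimate for means, $M_p(r,f^{(k)})\lesssim(1-r)^{-k}M_p\bigl(\tfrac{1+r}{2},f\bigr)$, the substitution $s=\tfrac{1+r}{2}$, and integration by parts against the increasing function $M_p^p(\cdot,f)$; this closes exactly under $\hat{\om}(r)\lesssim\hat{\om}\bigl(\tfrac{1+r}{2}\bigr)$, i.e.\ under $\om\in\hD$, with no pointwise (Bergman-disc) regularity of $\om$ entering at all. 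The known proofs of the lower bound under \eqref{0405-1} (see \cite{PjRe2022afm}, cited by the paper for precisely this circle of ideas) are likewise based on radial means and weighted Hardy-type inequalities for the increasing function $M_p(t,f')$, not on tent spaces; so the claim that ``naive radial estimates are genuinely insufficient'' is unsubstantiated and, for the upper inequality, false.

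The necessity half is also incomplete. Testing on monomials is the right move and correctly reduces the problem to the uniform two-sided moment comparison $\om_{Np+1}\approx N^{kp}(\om_{[kp]})_{(N-k)p+1}$, but you must then prove the contrapositive lemmas: that the uniform upper moment inequality forces $\om\in\hD$, and that the uniform lower one forces \eqref{0405-1} for some $K>1$, $C>1$. You merely assert the first (``the upper inequality fails along a sequence''), and for the second you exhibit the single model $\hat{\om}(r)=1/\log\frac{1}{1-r}$; a weight failing \eqref{0405-1} need not resemble that model, so one example establishes nothing. These moment-to-doubling implications (of the type $\om_x\approx\om_{2x}$ if and only if $\om\in\hD$, together with the strict analogue for the reverse condition) are genuine lemmas and constitute much of the content of the characterization in \cite{PjRj2021adv}. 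Note also a circularity inside your own $p=2$ paragraph: the asymptotics $\om_x\approx\hat{\om}(1-1/x)$ invoked there is itself an $\hD$ fact (see \cite{Pj2015}), so it cannot be used before doubling is known; consequently even your ``if and only if'' claim for the $p=2$ moment comparison is unproved as written.
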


\begin{Lemma}\label{0405-3} Assume  $1\leq p<\infty,n\in\N, \om\in\dD$ and $Y$ is a Banach space. Let $T: A_{\om_{[np]}}^p\to Y$ be a bounded linear operator.
Then the following statements hold.
$$\|TD^{(n)}\|_{A_\om^p\to Y}\approx \|T\|_{A_{\om_{[np]}}^p\to Y},\,\,\,
\|TD^{(n)}\|_{e,A_\om^p\to Y} \approx  \|T\|_{e,A_{\om_{[np]}}^p\to Y}.$$
\end{Lemma}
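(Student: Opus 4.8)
The plan is to realize $D^{(n)}$ as an essentially invertible map between $A_\om^p$ and $A_{\om_{[np]}}^p$ and then transfer the two norm identities through this invertibility. By Lemma \ref{0405-2} we have $\om_{[np]}\in\dD\subset\hD$, so $A_{\om_{[np]}}^p$ is a genuine doubling-weight Bergman space and Theorem A applies to $\om$ with $k=n$, giving for every $f\in H(\D)$ the comparison
$$\|f\|_{A_\om^p}^p\approx \sum_{j=0}^{n-1}|f^{(j)}(0)|^p+\int_\D |f^{(n)}(z)|^p(1-|z|^2)^{np}\om(z)\,dA(z)=\sum_{j=0}^{n-1}|f^{(j)}(0)|^p+\|f^{(n)}\|_{A_{\om_{[np]}}^p}^p.$$
Alongside $D^{(n)}$ I would introduce the $n$-th antiderivative operator $I^{(n)}$, sending $g\in H(\D)$ to the unique $f\in H(\D)$ with $f^{(n)}=g$ and $f^{(j)}(0)=0$ for $0\leq j\leq n-1$; this $f$ is well defined and analytic since $g$ is.

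Next I would record the boundedness of both auxiliary maps directly from the displayed comparison. For $f\in A_\om^p$ the right-hand side dominates $\|f^{(n)}\|_{A_{\om_{[np]}}^p}^p$, so $D^{(n)}\colon A_\om^p\to A_{\om_{[np]}}^p$ is bounded with norm $\lesssim 1$. Conversely, for $g\in A_{\om_{[np]}}^p$ the function $f=I^{(n)}g$ has vanishing initial data, whence all polynomial terms drop out and $\|I^{(n)}g\|_{A_\om^p}\approx\|g\|_{A_{\om_{[np]}}^p}$; thus $I^{(n)}\colon A_{\om_{[np]}}^p\to A_\om^p$ is bounded with norm $\lesssim 1$. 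The only algebraic fact I need is the identity $D^{(n)}I^{(n)}=\mathrm{Id}$ on $A_{\om_{[np]}}^p$.

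The norm equivalence then splits into two factorizations. For the upper bound I write, for $\|f\|_{A_\om^p}\leq 1$, $\|TD^{(n)}f\|_Y\leq\|T\|_{A_{\om_{[np]}}^p\to Y}\|f^{(n)}\|_{A_{\om_{[np]}}^p}\lesssim\|T\|_{A_{\om_{[np]}}^p\to Y}$, using boundedness of $D^{(n)}$. For the lower bound, given $g$ with $\|g\|_{A_{\om_{[np]}}^p}\leq 1$ I set $f=I^{(n)}g$, so that $Tg=TD^{(n)}f$ and $\|Tg\|_Y\leq\|TD^{(n)}\|_{A_\om^p\to Y}\|f\|_{A_\om^p}\lesssim\|TD^{(n)}\|_{A_\om^p\to Y}$. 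Taking suprema gives $\|TD^{(n)}\|_{A_\om^p\to Y}\approx\|T\|_{A_{\om_{[np]}}^p\to Y}$.

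Finally I would obtain the essential-norm identity by the same two factorizations applied to operators of the form $T-K$. If $K\colon A_{\om_{[np]}}^p\to Y$ is compact, then $KD^{(n)}$ is compact (compact times bounded), and the already-proved norm estimate applied to $T-K$ yields $\|TD^{(n)}-KD^{(n)}\|_{A_\om^p\to Y}\lesssim\|T-K\|_{A_{\om_{[np]}}^p\to Y}$, giving $\|TD^{(n)}\|_{e,A_\om^p\to Y}\lesssim\|T\|_{e,A_{\om_{[np]}}^p\to Y}$. For the reverse, given a compact $L\colon A_\om^p\to Y$, the operator $LI^{(n)}$ is compact, and using $T=TD^{(n)}I^{(n)}$ I write $T-LI^{(n)}=(TD^{(n)}-L)I^{(n)}$, so that $\|T-LI^{(n)}\|_{A_{\om_{[np]}}^p\to Y}\leq\|TD^{(n)}-L\|_{A_\om^p\to Y}\|I^{(n)}\|\lesssim\|TD^{(n)}-L\|_{A_\om^p\to Y}$; taking the infimum over compact $L$ yields $\|T\|_{e,A_{\om_{[np]}}^p\to Y}\lesssim\|TD^{(n)}\|_{e,A_\om^p\to Y}$. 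I expect no genuine obstacle here: the one point requiring care is the clean bookkeeping of the essential-norm lower bound, where the identity $D^{(n)}I^{(n)}=\mathrm{Id}$ must be used to pass compactness back and forth without picking up the finite-rank polynomial part.
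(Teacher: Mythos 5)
Your proposal is correct and follows essentially the same route as the paper: both rest on Theorem A (the Littlewood--Paley formula for $\om\in\dD$), the boundedness of $D^{(n)}:A_\om^p\to A_{\om_{[np]}}^p$ and of the $n$-fold antiderivative $I^n:A_{\om_{[np]}}^p\to A_\om^p$, and the transfer of compact operators via $KD^{(n)}$ and $KI^n$. The only difference is cosmetic: where the paper restricts the relevant suprema to functions with vanishing initial derivatives, you use the clean factorization $T=TD^{(n)}I^{(n)}$, which slightly streamlines the essential-norm bookkeeping.
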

\begin{proof}
Let $\eta=\om_{[np]}$. Since
\begin{align}\label{0429-1}
\|TD^{(n)}\|_{A_\om^p\to Y}=\sup_{f\not\equiv 0}\frac{\|TD^{(n)}f\|_{Y}}{\|f\|_{A_\om^p}}
=\sup_{f^{(n)}\not\equiv 0}\frac{\|TD^{(n)}f\|_{Y}}{\|f\|_{A_\om^p}},
\end{align}
by Theorem \ref{thA} we have
$$
\|TD^{(n)}\|_{A_\om^p\to Y}\geq \sup_{f\not\equiv 0 \atop{f(0)=\cdots=f^{(n-1)}(0)=0}}\frac{\|TD^{(n)}f\|_{Y}}{\|f\|_{A_\om^p}}
\approx  \sup_{f^{(n)}\not\equiv 0}\frac{\|T f^{(n)}\|_{Y}}{\|f^{(n)}\|_{A_{\eta}^p}}=\|T\|_{A_{\eta}^p\to Y},$$
and
$$\|TD^{(n)}\|_{A_\om^p\to Y}\lesssim  \sup_{f^{(n)}\not\equiv 0}\frac{\|T f^{(n)}\|_{Y}}{\|f^{(n)}\|_{A_{\eta}^p}}=\|T\|_{A_{\eta}^p\to Y}.$$
Therefore,
\begin{align}\label{0406-1}
\|TD^{(n)}\|_{A_\om^p\to Y}\approx \|T\|_{A_\eta^p\to Y}.
\end{align}

Suppose $K:A_\om^p\to Y$ is compact. Let $(If)(z)=\int_0^z f(\xi)d\xi$.   By Theorem \ref{thA},
$I^n:A_\eta^p\to A_\om^p$ is bounded. So, $KI^n:A_\eta^p\to Y$ is compact. By (\ref{0406-1}),
$$\|T\|_{e,A_\eta^p\to Y}\leq \|T-KI^n\|_{A_\eta^p\to Y}\approx \|TD^{(n)}-KI^nD^{(n)}\|_{A_\om^p\to Y}.$$
By (\ref{0429-1}) and Theorem \ref{thA},
\begin{align*}
\|TD^{(n)}-KI^nD^{(n)}\|_{A_\om^p\to Y}&=\sup_{f^{(n)}\not\equiv 0}\frac{\|(TD^{(n)}-KI^nD^{(n)})f
\|_{Y}}{\|f\|_{A_\om^p}}   \\
&\lesssim \sup_{f^{(n)}\not\equiv 0 \atop{f(0)=\cdots=f^{(n-1)}(0)=0}}\frac{\|(TD^{(n)}-KI^nD^{(n)})f\|_{Y}}{\|f\|_{A_\om^p}} \\
&=\sup_{f^{(n)}\not\equiv 0 \atop{f(0)=\cdots=f^{(n-1)}(0)=0}}\frac{\|(TD^{(n)}-K)f\|_{Y}}{\|f\|_{A_\om^p}} \\
&\leq \|TD^{(n)}-K\|_{A_\om^p\to Y}.
\end{align*}
Thus, $\|T\|_{e,A_\eta^p\to Y} \lesssim\|TD^{(n)}\|_{e,A_\om^p\to Y}$.

Conversely, suppose $K':A_\eta^p\to Y$ is compact.    By Theorem \ref{thA}, $D^{(n)}:A_\om^p\to A_\eta^p$ is bounded.
So, $K'D^{(n)}:A_\om^p\to Y$ is compact. By (\ref{0406-1}), $$\|TD^{(n)}\|_{e,A_\om^p\to Y}\leq \|TD^{(n)}-K'D^{(n)}\|_{A_\om^p\to Y}
\approx \|T-K'\|_{A_\eta^p\to Y}.$$
Therefore, $\|TD^{(n)}\|_{e,A_\om^p\to Y} \lesssim  \|T\|_{e,A_\eta^p\to Y}$.
The proof is complete.
\end{proof}

\begin{Remark} \label{0916-2} If $1\leq p,q<\infty, \om,\eta\in\dD$, $k\in\N\cup\{0\}$, for any $u\in H(\D)$ and $\vp\in S(\D)$, by Lemma \ref{0405-3},  we have
\begin{align*}
\|uD_{\vp}^{(k)}\|_{A_\om^p\to A_\eta^q}\approx \|uC_\vp\|_{A_{\om_{[kp]}}^p\to A_\eta^q},\,\,\,\,
\|uD_{\vp}^{(k)}\|_{e,A_\om^p\to A_\eta^q}\approx \|uC_\vp\|_{e,A_{\om_{[kp]}}^p\to A_\eta^q}.
\end{align*}
\end{Remark}

The norm and essential norm of $uC_\vp:A_\om^p\to A_\mu^q$ were investigated in \cite{DjLsSy2020ms}.
 To state them, we need some more notations. When $\om\in\hD$ and $0<p<\infty$, if  $\gamma>0$ is large enough, let
\begin{align*}
f_{\lambda,\gamma,\om,p}(z)=\left(\frac{1-|\lambda|^2}{1-\ol{\lambda}z}\right)^{\gamma}\frac{1}{\om(S(\lambda))^\frac{1}{p}},\, \,\lambda,z\in\D.
\end{align*}
According to \cite[Lemma 3.1]{Pj2015}, $\|f_{\lambda,\gamma,\omega,p}\|_{A_\omega^p}\approx 1$.
For brevity, we  denote $f_{\lambda,\gamma,\omega,p}$ by $f_{\lambda,\gamma}$.
When $u\in H(\D),\vp\in S(\D), 0<q<\infty, \mu\in\hD$, for any measurable  set $E\subset\D$, let
$$\nu_{u,\vp,q,\mu}(E)=\int_{\vp^{-1}(E)} |u(z)|^q\mu(z)dA(z).$$
Then, $\|uC_\vp f\|_{A_\mu^q}= \|f\|_{L_{\nu_{u,\vp,q,\mu}}^q}$.
The maximum function of  $\nu_{u,\vp,q,\mu}$ is defined by
$$M_\om(\nu_{u,\vp,q,\mu})(z)=\sup_{z\in S(a)}\frac{\nu_{u,\vp,q,\mu}(S(a))}{\om(S(a))}, \,\,\,a,z\in\D.$$

\begin{otherth}\label{thB}
Assume  $\om,\mu\in\hD$,  $u\in H(\D)$ and $\vp\in S(\D)$.
\begin{enumerate}[(i)]
  \item When $0<p\leq q<\infty$, the following estimates hold:
        $$\|uC_\vp\|_{A_\om^p\to A_\mu^q}^q  \approx  \sup_{\lambda\in\D} \int_\D |f_{\lambda,\gamma}(\vp(z))|^q  |u(z)|^q \mu(z) dA(z),$$
        and
        $$\|uC_\vp\|_{e,A_\om^p\to A_\mu^q}^q  \approx  \limsup_{|\lambda|\to 1} \int_\D |f_{\lambda,\gamma}(\vp(z))|^q  |u(z)|^q \mu(z) dA(z).$$
  \item When $0<q<p<\infty$, the following statements are equivalent:
        \begin{enumerate}
          \item $uC_\vp:{A_\om^p\to A_\mu^q}$ is bounded;
          \item $uC_\vp:{A_\om^p\to A_\mu^q}$ is compact;
          \item $\|M_\om(\nu_{u,\vp,q,\mu})\|_{L_\om^\frac{p}{p-q}}<\infty.$
        \end{enumerate}
        Moreover,
  $$\|uC_\vp\|^q _{A_\om^p\to A_\mu^q}\approx \|M_\om(\nu_{u,\vp,q,\mu})\|_{L_\om^\frac{p}{p-q}}. $$
\end{enumerate}
\end{otherth}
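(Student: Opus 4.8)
The whole statement concerns the embedding induced by the pull-back measure, so the plan is to convert it into a Carleson-measure problem. Write $\nu=\nu_{u,\vp,q,\mu}$. The recorded identity $\|uC_\vp f\|_{A_\mu^q}=\|f\|_{L_\nu^q}$ says that, as far as norms are concerned, $uC_\vp:A_\om^p\to A_\mu^q$ coincides with the inclusion $\iota_\nu:A_\om^p\to L_\nu^q$; the same holds verbatim for the essential norms, since auxiliary compact operators can be transported between the two pictures. Thus it suffices to decide when $\iota_\nu$ is bounded (resp.\ compact) and to compute its (essential) norm, for which the doubling-weight Carleson theory of \cite{PjRj2021adv,PjRjSk2021jga} is the natural engine. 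A preliminary step I would carry out is to match the test-function quantity in the statement with a Carleson-box quantity: testing against $f_{\lambda,\gamma}$, which satisfies $|f_{\lambda,\gamma}(w)|\approx\om(S(\lambda))^{-1/p}$ for $w\in S(\lambda)$, gives $\int_\D|f_{\lambda,\gamma}|^q\,d\nu\gtrsim\nu(S(\lambda))/\om(S(\lambda))^{q/p}$, while decomposing $\D$ into the Carleson boxes surrounding $\lambda$ and summing the resulting geometric series — convergent once $\gamma$ is large, by the estimate $\om(S(a))\approx(1-|a|)\hat{\om}(a)$ — yields the reverse bound. This produces
\begin{align*}
\sup_{\lambda\in\D}\int_\D|f_{\lambda,\gamma}(\vp(z))|^q|u(z)|^q\mu(z)\,dA(z)\approx\sup_{a\in\D}\frac{\nu(S(a))}{\om(S(a))^{q/p}},
\end{align*}
and the same statement with $\limsup_{|\lambda|\to1}$ and $\limsup_{|a|\to1}$.

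For part (i), where $0<p\le q$, the lower bounds are immediate: since $\|f_{\lambda,\gamma}\|_{A_\om^p}\approx1$, testing gives $\|uC_\vp\|^q\gtrsim\int_\D|f_{\lambda,\gamma}(\vp(z))|^q|u(z)|^q\mu(z)\,dA(z)$ for each $\lambda$. For the essential-norm lower bound I would use that (for $\gamma$ large) $f_{\lambda,\gamma}\to0$ uniformly on compact subsets of $\D$ as $|\lambda|\to1$, so that $\|Kf_{\lambda,\gamma}\|_{A_\mu^q}\to0$ for every compact $K$, whence $\|uC_\vp-K\|\gtrsim\limsup_{|\lambda|\to1}\|uC_\vp f_{\lambda,\gamma}\|_{A_\mu^q}$. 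The matching upper bounds are where the embedding theorem enters: it gives $\|\iota_\nu\|^q\approx\sup_a\nu(S(a))/\om(S(a))^{q/p}$, and for the essential norm one restricts $\nu$ to $\{|w|\le r\}$, observing by a normal-families argument that the central part induces a compact operator, so that $\|uC_\vp\|_{e}^q$ is controlled by $\sup_{|a|>r}\nu(S(a))/\om(S(a))^{q/p}$; letting $r\to1$ and invoking the box/test-function equivalence closes the estimate.

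For part (ii), where $0<q<p$, the relevant exponent is $p/(p-q)$ and the governing object is $M_\om(\nu)$. The equivalence of (a) and (c), together with the norm estimate $\|\iota_\nu\|^q\approx\|M_\om(\nu)\|_{L_\om^{p/(p-q)}}$, is the Luecking-type embedding theorem for doubling weights from \cite{PjRjSk2021jga}. The genuinely new feature of this range is that boundedness already forces compactness: writing $\nu_r$ for the restriction of $\nu$ to $\{|w|>r\}$, one checks by dominated convergence that $\|M_\om(\nu_r)\|_{L_\om^{p/(p-q)}}\to0$ as $r\to1$, so the tail embeddings have norm tending to $0$ while the complementary central part is compact; this gives (a)$\Rightarrow$(b), and (b)$\Rightarrow$(a) is trivial, completing the cycle.

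The step I expect to be the main obstacle is the Carleson-box reduction of the test-function integral, namely making the geometric summation over the boxes enlarging away from $\lambda$ rigorous and dominating it by the box supremum; this is exactly where the doubling hypothesis $\om\in\hD$ is used essentially, both to guarantee $\|f_{\lambda,\gamma}\|_{A_\om^p}\approx1$ and to control the growth of $\om(S(a))$ as the boxes expand. Granting the cited embedding theorems, the remaining ingredients are soft — direct testing for the lower bounds and standard truncation/normal-families arguments for compactness and the essential-norm upper bounds.
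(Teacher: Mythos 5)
You should note first that the paper itself gives no proof of Theorem B: it is imported verbatim from \cite{DjLsSy2020ms} (``The norm and essential norm of $uC_\vp:A_\om^p\to A_\mu^q$ were investigated in \cite{DjLsSy2020ms}''), so there is no in-paper argument to compare against. Your outline --- reduce to the embedding $A_\om^p\to L_\nu^q$ via the pull-back measure, identify the test-function quantity with the Carleson-box quantity $\sup_a\nu(S(a))/\om(S(a))^{q/p}$, and then invoke the doubling-weight Carleson/Luecking embedding theorems of \cite{PjRj2021adv,PjRjSk2021jga} --- is exactly the strategy of the cited source, and the individual steps (testing with $f_{\lambda,\gamma}$, geometric summation over expanding boxes using $\om\in\hD$, truncation for the essential norm, and the automatic compactness in the range $q<p$) are all sound. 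The one imprecise spot is the claim that essential norms ``transfer verbatim'' between the operator picture and the embedding picture: a compact map $A_\om^p\to L_\nu^q$ does not canonically produce a compact map $A_\om^p\to A_\mu^q$, so that sentence proves nothing as stated. It is also harmless, because the upper bound you actually run uses the concrete compact approximants $uC_\vp K_r$ with $(K_rf)(z)=f(rz)$ (equivalently, truncating $\nu$ to $\{|w|\le r\}$ \emph{inside the operator picture}), and the lower bound only needs the weakly-null family $f_{\lambda,\gamma}$; both of these live entirely on the operator side, so the transfer claim can simply be deleted.
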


For a positive number $\gamma$ and $j\in\N$, let $(\gamma)_j=\gamma(\gamma+1)\cdots(\gamma+j-1)$ and $(\gamma)_0=1$.
The following lemma is a refinement of a statement  in the proof  of Theorem 3 in \cite{AsFt2019caot}.

\begin{Lemma}\label{0331-1}
Suppose $n\in\N\cup\{0\}$ and $M\geq 1$.  There exists a strictly increasing sequence $\{\gamma_k\}_{k=0}^n$
such that $\gamma_0$ is large enough and
\begin{align}\label{0406-3}
M+M^2\sum_{k\neq j,{0\leq k\leq n}} \gamma_k^{\frac{1}{2}-k}(\gamma_k)_j<\gamma_j^{\frac{1}{2}-j}(\gamma_j)_j,  \,\,\,\,j=0,1,\cdots,n.
\end{align}
\end{Lemma}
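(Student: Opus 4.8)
The plan is to build $\{\gamma_k\}_{k=0}^n$ by a greedy induction, selecting $\gamma_0<\gamma_1<\cdots<\gamma_n$ one term at a time with each new term taken enormous relative to those already chosen. The heuristic driving the choice is that $(\gamma)_j$ is a monic polynomial of degree $j$, so $\gamma_k^{\frac12-k}(\gamma_k)_j$ behaves like $\gamma_k^{\,j-k+\frac12}$ for large $\gamma_k$. Hence the diagonal term on the right of (\ref{0406-3}), namely $\gamma_j^{\frac12-j}(\gamma_j)_j \ge \gamma_j^{1/2}$, blows up with $\gamma_j$, whereas among the off-diagonal terms those with $k<j$ carry a positive exponent (they grow) and those with $k>j$ carry a negative exponent $\le -\frac12$ (they decay). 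The main obstacle is precisely that the $j$-th inequality couples $\gamma_j$ to \emph{all} the other parameters, including the not-yet-chosen $\gamma_k$ with $k>j$; the resolution is to observe that this sign dichotomy makes every requirement monotone in the same direction --- the backward terms are defeated by enlarging the diagonal $\gamma_j$, and the forward terms are defeated by enlarging every later $\gamma_k$ --- so a single increasing sequence can satisfy all $n+1$ inequalities simultaneously.

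Concretely I would first record two elementary estimates. Since each of the $j$ factors of $(\gamma_j)_j$ is at least $\gamma_j$, we have $\gamma_j^{\frac12-j}(\gamma_j)_j \ge \gamma_j^{1/2}$; and for $\gamma_k\ge n$ we have $(\gamma_k)_j \le (\gamma_k+n)^j \le (2\gamma_k)^j \le 2^n\gamma_k^j$, so that for $k>j$
\[
\gamma_k^{\frac12-k}(\gamma_k)_j \le 2^n\gamma_k^{\,j-k+\frac12}\le 2^n\gamma_k^{-1/2}.
\]
Writing the left-hand sum in (\ref{0406-3}) as $B_j+F_j$ with the backward part $B_j:=\sum_{k<j}\gamma_k^{\frac12-k}(\gamma_k)_j$ and the forward part $F_j:=\sum_{k>j}\gamma_k^{\frac12-k}(\gamma_k)_j$, the goal becomes $\gamma_j^{\frac12-j}(\gamma_j)_j > M+M^2B_j+M^2F_j$.

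For the induction, fix once and for all the threshold $L:=\big(2^{n+1}(n+1)M^2\big)^2$, and, having chosen $\gamma_0<\cdots<\gamma_{j-1}$ (so that $B_j$ is already a fixed number), choose $\gamma_j$ to exceed $\max\{\gamma_{j-1},\,n,\,L,\,(2M+2M^2B_j)^2\}$. Two consequences follow. First, $\gamma_j^{\frac12-j}(\gamma_j)_j\ge\gamma_j^{1/2}>2M+2M^2B_j$. Second, because every $\gamma_k$ with $k>j$ exceeds $L$, the forward estimate above yields $F_j\le 2^n\sum_{k>j}\gamma_k^{-1/2}<2^n(n+1)L^{-1/2}=\tfrac{1}{2M^2}$ (the case $j=n$ being trivial since $F_n=0$), whence $M^2F_j<\tfrac12$. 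Combining these with $M+M^2B_j\ge M\ge1>\tfrac12$ gives $M+M^2B_j+M^2F_j<2(M+M^2B_j)<\gamma_j^{\frac12-j}(\gamma_j)_j$, which is exactly (\ref{0406-3}). The sequence is strictly increasing by construction, and $\gamma_0$ (forced to exceed $\max\{n,L,4M^2\}$) is as large as required. I expect the only genuinely delicate point to be the bookkeeping that keeps the forward tail uniformly small across all indices, which the single fixed threshold $L$ handles cleanly.
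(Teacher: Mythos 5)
Your proof is correct and follows essentially the same route as the paper: the same backward/forward decomposition of the off-diagonal sum, the same bounds $\gamma_j^{1/2}\leq\gamma_j^{\frac12-j}(\gamma_j)_j$ and $(\gamma_k)_j\leq(2\gamma_k)^j$ for the forward tail, and the same greedy induction choosing each $\gamma_j$ large relative to the already-fixed backward sum. The only (cosmetic) difference is that you force the forward tail below $\tfrac{1}{2M^2}$ via the threshold $L$, whereas the paper simply absorbs it into the uniform constant $n2^nM^2$.
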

\begin{proof}
Suppose $\gamma_k>n$ for all $k=0,1,\cdots,n$. Since $\gamma_j^\frac{1}{2}\leq \gamma_j^{\frac{1}{2}-j}(\gamma_j)_j$ and
\begin{align*}
M+M^2\sum_{k\neq j,{0\leq k\leq n}} \gamma_k^{\frac{1}{2}-k}(\gamma_k)_j
&=M+M^2\sum_{k=0}^{j-1} \gamma_k^{\frac{1}{2}-k}(\gamma_k)_j +M^2\sum_{k=j+1}^n  \gamma_k^{\frac{1}{2}-k}(\gamma_k)_j   \\
&\leq M+M^2\sum_{k=0}^{j-1} \gamma_k^{\frac{1}{2}-k}(\gamma_k)_j +M^2\sum_{k=j+1}^n  \gamma_k^{\frac{1}{2}-k}(2\gamma_k)^j   \\
&\leq M+M^2\sum_{k=0}^{j-1} \gamma_k^{\frac{1}{2}-k}(\gamma_k)_j +n2^nM^2,
\end{align*}
it is enough to choose $\{\gamma_k\}$ such that
\begin{align*}
M+M^2\sum_{k=0}^{j-1} \gamma_k^{\frac{1}{2}-k}(\gamma_k)_j +n2^n M^2  <  \gamma_j^{\frac{1}{2}},\,j=0,1,\cdots,n.
\end{align*}
When $j=0$, let $\gamma_0>(M+n2^nM^2)^2$. Suppose $\{\gamma_k\}_{k=0}^{j-1}$ is chosen. Then we can choose
$$\gamma_{j}>\left(M+M^2\sum_{k=0}^{j-1} \gamma_k^{\frac{1}{2}-k}(\gamma_k)_j +n2^nM^2 \right)^2.$$
By mathematic induction, we get the desired result. The proof is complete.
\end{proof}

\begin{Lemma}\label{lm3.2}\cite[Lemma 3.2]{AsFt2019caot}
Let $\{\lambda_j\}_{j=0}^n\subset\D$ and $\{z_j\}_{j=0}^n\subset\D$. There exists a constant $C$ depending only on $n$ such that there exists a polynomial $p$ with $\|p\|_{H^\infty}<C$  and $p^{(j)}(\lambda_j)=z_j$ for all $j=0,1,2,\cdots,n$.
\end{Lemma}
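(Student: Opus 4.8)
The plan is to look for $p$ in the simplest possible form, namely a polynomial of degree at most $n$, say $p(z)=\sum_{k=0}^n a_k z^k$, and to determine its $n+1$ coefficients directly from the $n+1$ interpolation conditions. The crucial observation is that the resulting linear system for $(a_0,\dots,a_n)$ is triangular. Indeed, writing
$$p^{(j)}(z)=\sum_{k=j}^n \frac{k!}{(k-j)!}\,a_k\,z^{k-j},$$
the condition $p^{(j)}(\lambda_j)=z_j$ reads
$$j!\,a_j = z_j-\sum_{k=j+1}^n \frac{k!}{(k-j)!}\,a_k\,\lambda_j^{\,k-j},$$
so that $a_j$ is completely determined by $z_j$, by $\lambda_j$, and by the \emph{higher} coefficients $a_{j+1},\dots,a_n$ alone. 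The diagonal coefficient is $j!\neq 0$, so the system is always uniquely solvable; I would solve it by downward recursion, first setting $a_n=z_n/n!$, then $a_{n-1}$, and so on down to $a_0$.

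The second step is the uniform estimate. Since $\lambda_j,z_j\in\D$ we have $|\lambda_j|<1$ and $|z_j|<1$, so the recursion above yields
$$|a_j|\le \frac{1}{j!}\Big(1+\sum_{k=j+1}^n \frac{k!}{(k-j)!}\,|a_k|\Big).$$
I would then define constants $C_j$ by the same downward recursion taken with equality, namely $C_j=\frac{1}{j!}\big(1+\sum_{k=j+1}^n \frac{k!}{(k-j)!}\,C_k\big)$ (so that $C_n=1/n!$, the empty sum being zero); a straightforward downward induction gives $|a_j|\le C_j$ for every $j$, and each $C_j$ depends only on $n$. Consequently, for every $z\in\D$,
$$|p(z)|\le \sum_{k=0}^n |a_k|\,|z|^k\le \sum_{k=0}^n C_k,$$
and taking $C:=1+\sum_{k=0}^n C_k$, which depends only on $n$, delivers the strict bound $\|p\|_{H^\infty}<C$ together with all the required conditions $p^{(j)}(\lambda_j)=z_j$.

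The one point that truly needs care — and the reason the construction works uniformly — is the independence of the bound from the locations $\lambda_j$ and the targets $z_j$. This hinges entirely on the triangular structure: because the coefficient of $a_j$ in the $j$-th equation is the constant $j!$, with no dependence on $\lambda_j$, solving for $a_j$ never requires dividing by a quantity that can degenerate as the $\lambda_j$ approach the boundary. A naive alternative that fixed the nodes and inverted a full (non-triangular) interpolation matrix would instead produce a bound governed by the conditioning of that matrix, which generally blows up as $|\lambda_j|\to 1$; the whole difficulty is sidestepped here because each interpolation condition pins down a single fresh coefficient through its leading term. The argument is elementary and self-contained, requiring none of the weight-theoretic ingredients of the preliminaries.
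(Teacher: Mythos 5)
Your proof is correct and complete. Note that the paper itself gives no argument for this lemma at all: it is imported verbatim from Acharyya--Ferguson \cite[Lemma 3.2]{AsFt2019caot}, so there is no in-paper proof to compare against, and your elementary construction makes the statement self-contained. The mechanism you isolate is exactly the right one: seeking $p$ of degree at most $n$, the $n+1$ conditions $p^{(j)}(\lambda_j)=z_j$ form a linear system in $(a_0,\dots,a_n)$ that is upper triangular with \emph{constant} diagonal entries $j!$, so solving by downward recursion never divides by anything depending on the nodes, and the bounds $|z_j|<1$, $|\lambda_j|<1$ feed a clean downward induction $|a_j|\le C_j$ with $C_j$ depending only on $n$. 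This is precisely why the constant is uniform even as the $\lambda_j$ cluster at the boundary, in contrast to Lagrange-type interpolation at prescribed nodes, whose conditioning degenerates --- a contrast you correctly flag. Your choice $C=1+\sum_{k=0}^n C_k$ also delivers the \emph{strict} inequality $\|p\|_{H^\infty}<C$ demanded by the statement. The only cosmetic caveat: the conclusion $|p(z)|\le\sum_{k=0}^n C_k$ uses $|z|<1$, which is all that $\|\cdot\|_{H^\infty}$ requires here, so nothing is missing.
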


As usual, let   $\beta(\cdot,\cdot)$ be the Bergman metric,
i.e.,  for all $\xi,\eta\in\D$,
$$\beta(\xi,\eta)=\frac{1}{2}\log\frac{1+|\vp_\xi(\eta)|}{1-|\vp_\xi(\eta)|},\,\,\mbox{ where }\,\,\vp_\xi(\eta)=\frac{\xi-\eta}{1-\ol{\xi}\eta}.$$

\begin{Lemma}\label{lm3.5} \cite[Lemma 3.5]{AsFt2019caot}
Let $c>1, \varepsilon>0, J,M\in\N$ and $N\in\N\cup\{0\}$ be given.
Then there is a constant $C$ such that for all $\{z_j\}_{j=1}^J\subset\D$ and $\{w_m\}_{m=1}^M\subset \D$ satisfying
$$\beta(z_j,z_1)<\varepsilon, \,\,\,\,\beta(w_m,z_1)>c\varepsilon ,\,\,(1\leq j\leq J,\,\,1\leq m \leq M),$$
there exists a function $f\in H(\D)$ satisfying
$$\|f\|_{H^\infty}\leq C,\,\,\, f^{(n)}(z_j)=\delta_{0n},\,\,\,\,f^{(n)}(w_m)=0 $$
for all $0\leq n\leq N, 1\leq j\leq J, 1\leq m\leq M$.
\end{Lemma}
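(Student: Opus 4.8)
The plan is to reduce to the case $z_1=0$ by a disc automorphism, to replace the prescribed derivative values by order‑of‑vanishing conditions (which are invariant under a biholomorphic change of variable), and then to build $f$ as a product of a Blaschke factor carrying the zeros at the far points $w_m$ and a Hermite interpolating polynomial carrying the prescribed jet at the cluster points $z_j$. First I would record that $\beta(\xi,\eta)<s$ is equivalent to $|\vp_\xi(\eta)|<\tanh s$, so with $r=\tanh\varepsilon$ and $R=\tanh(c\varepsilon)$ the hypotheses read $|\vp_{z_1}(z_j)|<r$ and $|\vp_{z_1}(w_m)|>R$, and $c>1$ gives $r<R$. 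Letting $\vp_{z_1}$ be the involutive automorphism with $\vp_{z_1}(z_1)=0$, put $\tilde z_j=\vp_{z_1}(z_j)$ and $\tilde w_m=\vp_{z_1}(w_m)$, so that $|\tilde z_j|<r$ and $|\tilde w_m|>R$. Since composing with the biholomorphism $\vp_{z_1}$ preserves the order of vanishing of $f$ and of $f-1$, it suffices to produce $g\in H^\infty(\D)$ with $\|g\|_{H^\infty}\le C$ such that $g-1$ vanishes to order $N+1$ at each $\tilde z_j$ and $g$ vanishes to order $N+1$ at each $\tilde w_m$; then $f=g\circ\vp_{z_1}$ works, with $\|f\|_{H^\infty}=\|g\|_{H^\infty}$.

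Next I would take the finite Blaschke product $\Phi(w)=\prod_{m=1}^M\big(\vp_{\tilde w_m}(w)\big)^{N+1}$, so that $\|\Phi\|_{H^\infty}\le1$ and $\Phi$ vanishes to order $N+1$ at each $\tilde w_m$. Fix $\rho\in(r,R)$. Using the pseudohyperbolic triangle inequality $|\vp_a(b)|\ge\frac{|\,|a|-|b|\,|}{1-|a||b|}$, for $|w|\le\rho$ and $|\tilde w_m|\ge R$ one obtains $|\vp_{\tilde w_m}(w)|\ge\frac{R-\rho}{1-R\rho}>0$, hence $|\Phi(w)|\ge\kappa>0$ on $\{|w|\le\rho\}$ for some $\kappa=\kappa(R,\rho,M,N)$. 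In particular $1/\Phi$ is holomorphic on $\{|w|<R\}$, which contains all the cluster points $\tilde z_j$.

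Finally I would set $g=\Phi\,Q$, where $Q$ is the Hermite interpolating polynomial matching the jet of $1/\Phi$ up to order $N$ at each $\tilde z_j$, so that $Q-1/\Phi$ vanishes to order $N+1$ at every $\tilde z_j$. Then $g-1=\Phi\,(Q-1/\Phi)$ vanishes to order $N+1$ at each $\tilde z_j$, while $g=\Phi Q$ vanishes to order $N+1$ at each $\tilde w_m$; the remaining point is the bound $\|g\|_{H^\infty}\le\|Q\|_{H^\infty}\le C$. For this I would use Hermite's contour formula
\[
Q(w)=\frac{1}{2\pi i}\oint_{|\zeta|=\rho}\frac{1}{\Phi(\zeta)}\,\frac{\Pi(\zeta)-\Pi(w)}{\Pi(\zeta)(\zeta-w)}\,d\zeta,\qquad \Pi(w)=\prod_{j=1}^J(w-\tilde z_j)^{N+1}.
\]
On $|\zeta|=\rho$ one has $|1/\Phi(\zeta)|\le 1/\kappa$ and $|\Pi(\zeta)|\ge(\rho-r)^{J(N+1)}$, while $\frac{\Pi(\zeta)-\Pi(w)}{\zeta-w}$ is a polynomial whose coefficients are elementary symmetric functions of the $\tilde z_j\in\D$, hence bounded on $\{|\zeta|\le1,\ |w|\le1\}$ by a constant $C_{J,N}$. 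Combining these estimates bounds $\|Q\|_{H^\infty(\D)}$ by a constant depending only on $r,R,\rho,J,M,N$, that is, only on $c,\varepsilon,J,M,N$, as required.

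The main obstacle is exactly this uniformity of $C$: the far points may approach $\partial\D$ and the cluster points may coalesce, yet $C$ must not depend on their positions. The Blaschke normalization resolves the first difficulty, since $\|\Phi\|_{H^\infty}\le1$ no matter how close the $\tilde w_m$ are to the boundary, while the gap $\rho<R$ keeps $\Phi$ bounded below on the interpolation region. Hermite's integral formula resolves the second, because it produces $Q$ with bounds insensitive to confluence of the nodes $\tilde z_j$: it never divides by node differences, only by the fixed gap $\rho-r$ and by the contour values of $1/\Phi$.
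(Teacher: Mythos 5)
Your proof is correct, but there is nothing internal to compare it against: the paper does not prove this lemma at all, importing it verbatim from \cite[Lemma 3.5]{AsFt2019caot}, so your argument supplies a self-contained proof where the paper offers only a black box. The structure is sound: the reduction to $z_1=0$ via the involution $\vp_{z_1}$ is legitimate because the conditions $f^{(n)}(z_j)=\delta_{0n}$ and $f^{(n)}(w_m)=0$ for $0\le n\le N$ say exactly that $f-1$ and $f$ vanish to order $N+1$ at the respective points, and orders of vanishing are invariant under a biholomorphic change of variable; the identity $\beta(\xi,\eta)=\frac{1}{2}\log\frac{1+|\vp_\xi(\eta)|}{1-|\vp_\xi(\eta)|}$ turns the hypotheses into $|\tilde z_j|<r=\tanh\varepsilon$ and $|\tilde w_m|>R=\tanh(c\varepsilon)$ with $r<R$, as you say. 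The three places where uniformity of $C$ could fail are all genuinely handled: the monotonicity of $t\mapsto (t-\rho)/(1-t\rho)$ gives $|\Phi|\ge\kappa(c,\varepsilon,M,N)>0$ on $\{|w|\le\rho\}$ for fixed $\rho\in(r,R)$, independently of how close the $\tilde w_m$ are to $\partial\D$; Hermite's contour formula produces $Q$ without ever dividing by node differences, so confluence of the $\tilde z_j$ costs nothing; and the resulting bound on $Q$ holds on all of $\D$, not merely on $\{|w|<\rho\}$, because both sides of the contour identity are polynomials in $w$ agreeing on the open disc $\{|w|<\rho\}$ --- this last point deserves the one explicit sentence you implicitly rely on, since the naive derivation of the formula only covers $w$ inside the contour. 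Two consistency checks also go through: $z_j\ne w_m$ automatically since $\beta(z_j,z_1)<\varepsilon<c\varepsilon<\beta(w_m,z_1)$, so the interpolation conditions never conflict, and $g-1=\Phi\left(Q-1/\Phi\right)$ vanishes to order $N+1$ at each $\tilde z_j$ because $\Phi$ is zero-free on $\{|w|<R\}$. Compared with simply citing the source, your construction has the merit of being quantitative --- it exhibits $C=C(c,\varepsilon,J,M,N)$ explicitly through $\kappa$, $(\rho-r)^{J(N+1)}$, and the coefficient bound $C_{J,N}$ --- and it is in the same constructive spirit as the bounded Hermite interpolation of Lemma \ref{lm3.2} that the paper does quote, which is exactly what the applications in Theorem \ref{th3}, Case (b), require.
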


The following lemma can be obtained by a standard argument, see   Lemma 2.10 in \cite{Tm1996} for example, and we omit its proof here.

\begin{Lemma}\label{0406-2} Suppose $0<p,q<\infty$, $\om,\mu\in\hD$. Let $Y$ be $A_\mu^q$ or $H^\infty$. If $T:A_\om^p\to Y$ is bounded, then $T$ is compact if and only if $\|T{f_n}\|_Y\to 0$ as $n\to \infty$ whenever $\{f_n\}$ is bounded in $A_\om^p$ and uniformly converges to 0 on any compact subset of $\D$ as $n\to \infty$.
\end{Lemma}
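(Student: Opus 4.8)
The plan is to prove the two implications separately, and the common technical input I would isolate first is that the hypotheses $\om,\mu\in\hD$ force the point evaluations on all three spaces to be bounded. For $A_\om^p$ this is the standard growth estimate $|f(z)|^p\lesssim \|f\|_{A_\om^p}^p/\om(S(z))$ (cf. \cite[Lemma 3.1]{Pj2015}), for $A_\mu^q$ it is the analogous estimate with $\mu$, and for $H^\infty$ it is trivial. The consequence I will use repeatedly is that \emph{norm convergence in $Y$ (and in $A_\om^p$) implies uniform convergence on every compact subset of $\D$, hence pointwise convergence.}

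For the sufficiency direction (the sequential condition implies compactness) I would argue by normal families, and this part is clean and valid for all $0<p<\infty$. Let $\{g_n\}$ be an arbitrary sequence with $\|g_n\|_{A_\om^p}\leq 1$. By the bounded point evaluations, $\{g_n\}$ is uniformly bounded on each compact subset of $\D$, so by Montel's theorem some subsequence $g_{n_k}\to g$ uniformly on compacta, with $g\in H(\D)$. Fatou's lemma gives $\|g\|_{A_\om^p}\leq\liminf_k\|g_{n_k}\|_{A_\om^p}\leq 1$, so $g\in A_\om^p$. Now $\{g_{n_k}-g\}$ is bounded in $A_\om^p$ and tends to $0$ uniformly on compacta, so the hypothesis yields $\|T(g_{n_k}-g)\|_Y\to 0$, i.e.\ $Tg_{n_k}\to Tg$ in $Y$. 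Thus the image of every bounded sequence has a norm-convergent subsequence, which is precisely compactness of $T$.

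For the necessity direction (compactness implies the sequential condition) I would take $\{f_n\}$ bounded in $A_\om^p$ with $f_n\to 0$ uniformly on compacta, and show $\|Tf_n\|_Y\to 0$ via the subsequence principle: it suffices to show that every subsequence admits a further subsequence along which $\|Tf_\cdot\|_Y\to 0$. Given a subsequence, compactness of $T$ provides a further subsequence $\{f_{n_k}\}$ with $Tf_{n_k}\to g$ in $Y$. By the opening remark, $Tf_{n_k}\to g$ pointwise, so $g(z)=\lim_k (Tf_{n_k})(z)$ for each $z\in\D$. On the other hand, for fixed $z$ the functional $f\mapsto (Tf)(z)$ is the composition of the bounded operator $T$ with the bounded point evaluation at $z$ on $Y$, hence lies in $(A_\om^p)^{*}$; consequently $(Tf_{n_k})(z)\to 0$ once one knows that $\{f_{n_k}\}$ converges weakly to $0$ in $A_\om^p$. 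This forces $g\equiv 0$ and therefore $\|Tf_{n_k}\|_Y\to\|g\|_Y=0$, completing the subsequence argument.

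The main obstacle is thus the single fact that \emph{a bounded sequence in $A_\om^p$ converging to $0$ uniformly on compact subsets is weakly null}; indeed, taking $Y=H^\infty$ and the rank-one compact operators $f\mapsto \psi(f)\mathbf 1$ shows this is not merely sufficient but \emph{equivalent} to the necessity direction. For $1<p<\infty$ the space $A_\om^p$ is reflexive, so the bounded sequence is weakly sequentially compact, and since the bounded point evaluations separate points every weak cluster point must coincide with the locally uniform limit $0$, giving weak nullity of the whole sequence. For $p=1$ reflexivity is unavailable, and there I would instead invoke the explicit integral duality for Bergman spaces induced by doubling weights and estimate $\psi(f_n)$ by splitting $\D$ into a central compact disc, where $f_n\to 0$ uniformly, and a boundary annulus whose contribution is controlled by $\sup_n\|f_n\|_{A_\om^1}$ times the tail of the representing function. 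This split-and-tail estimate, which is the routine ``standard argument'' alluded to in the statement, delivers weak nullity in the remaining cases and closes the proof.
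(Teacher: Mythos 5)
Your sufficiency half (Montel plus Fatou plus the subsequence characterization of relative compactness) is correct for all $0<p<\infty$ and is exactly the standard argument. The genuine gap is in the necessity half. Your reduction there is sharp and honest: necessity for \emph{arbitrary} compact $T$ is, as you note via rank-one operators, equivalent to the assertion that every norm-bounded sequence in $A_\omega^p$ tending to $0$ uniformly on compacta is weakly null. Your reflexivity argument settles $1<p<\infty$. But at $p=1$ that assertion is \emph{false}, so your split-and-tail step is not a routine estimate that closes the proof --- it cannot work. Take $\omega\equiv 1$. The dual of the classical $A^1$ is the Bloch space $\mathcal{B}$ under $\psi_h(f)=\lim_{r\to 1^-}\int_{\mathbb{D}}f(rz)\overline{h(z)}\,dA(z)$, and Bloch functions are unbounded, so your tail term $\int_{r<|z|<1}|f_n||h|\,dA$ is controlled only by $\sup_n\|f_n\|_{A^1}$ times $\sup_{r<|z|<1}|h|$, which does not tend to $0$; smallness of the tail holds only for symbols that vanish at the boundary, i.e., your argument yields weak-star nullity against the predual (little Bloch), not weak nullity. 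Concretely, let $h(z)=\log\frac{2}{1-z}\in\mathcal{B}$ and $f_\lambda=K_\lambda/\|K_\lambda\|_{A^1}$ with $K_\lambda(z)=(1-\overline{\lambda}z)^{-2}$. The reproducing property gives $\psi_h(f_\lambda)=\overline{h(\lambda)}/\|K_\lambda\|_{A^1}$, and by the standard Forelli--Rudin estimate both $|h(\lambda)|$ and $\|K_\lambda\|_{A^1}$ are comparable to $\log\frac{1}{1-\lambda}$ as $\lambda\to1^-$ along $(0,1)$; thus $f_\lambda$ is bounded in $A^1$ and tends to $0$ locally uniformly while $\psi_h(f_\lambda)$ stays bounded away from $0$. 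By your own equivalence remark, this defeats the weak-nullity route at $p=1$. You also leave $0<p<1$, which the lemma's hypotheses include, completely untreated, and there neither reflexivity nor Banach duality is available.

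It is worth comparing with what the paper actually does: it gives no proof, citing Lemma 2.10 of Tjani's thesis as ``a standard argument,'' and Tjani's lemma carries an extra hypothesis that your proposal bypasses (and that the paper's statement silently drops): $T$ is assumed continuous from the topology of uniform convergence on compact sets to itself on bounded sets --- equivalently here, bounded sequences with $f_n\to0$ locally uniformly satisfy $Tf_n\to0$ pointwise. With that hypothesis the necessity direction needs no duality at all: if $\|Tf_{n_k}\|_Y\geq\varepsilon$ and, by compactness, $Tf_{n_{k_j}}\to g$ in $Y$, then $Tf_{n_{k_j}}\to g$ pointwise since point evaluations on $Y$ are bounded, while co-continuity forces $Tf_{n_{k_j}}\to0$ pointwise, so $g\equiv0$, a contradiction; this works uniformly for all $0<p<\infty$, including $p\leq1$. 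Every concrete operator the paper feeds into the lemma ($K_r$, the operators $u_kD_{\varphi_k}^{(k)}$ and their sums) has this co-continuity, so the correct repair of your proof is to add that hypothesis or restrict the class of $T$, not to pursue weak nullity. Indeed, your counterexample analysis, pushed through the rank-one operator $f\mapsto\psi_h(f)\mathbf{1}:A^1\to H^\infty$ (compact, yet $\|Tf_\lambda\|_{H^\infty}\not\to0$), shows that the necessity direction for \emph{arbitrary} bounded $T$ fails at $p=1$ as literally stated --- a caveat that also touches the paper's own application of the necessity half to an arbitrary compact $K$ in the proof of Theorem 2 when $p=1$.
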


\section{proof of Theorem  \ref{th1}}

\begin{proof}[Proof of Therem \ref{th1}]
It is obvious that
\begin{align*}
\|T_{n,\varphi,\vec{u}}\|_{A_\om^p\to A_\mu^q}\lesssim \sum_{k=0}^n\|u_k D_\vp^{(k)}\|_{A_\om^p\to A_\mu^q} \,\,\mbox{ when }\,\, 1\leq p, q<\infty
\end{align*}
and
\begin{align*}
\|T_{n,\varphi,\vec{u}}\|_{e,A_\om^p\to A_\mu^q}\lesssim \sum_{k=0}^n\|u_k D_\vp^{(k)}\|_{e,A_\om^p\to A_\mu^q} \,\,\mbox{ when }\,\, 1\leq p\leq q<\infty.
\end{align*}

Next, we only need to prove the inverse  of the above inequalities.
We first claim that
\begin{align}\label{0405-4}
\|u_0D_\vp^{(0)}\|_{A_\om^p\to A_\mu^q}\lesssim \|T_{n,\varphi,\vec{u}}\|_{A_\om^p\to A_\mu^q}\,\,\mbox{ when }\,\, 1\leq p,  q<\infty
\end{align}
and
\begin{align}\label{0912-2}\
\|u_0D_\vp^{(0)}\|_{e,A_\om^p\to A_\mu^q}\lesssim \|T_{n,\varphi,\vec{u}}\|_{e, A_\om^p\to A_\mu^q} \,\,\mbox{ when }\,\, 1\leq p\leq q<\infty.
\end{align}
Take these for granted for a moment.
Let $\widetilde{T}_{n-1,\varphi,\vec{u}}=\sum_{j=0}^{n-1} u_{j+1}D_\vp^{(j)}.$ By Lemmas \ref{0405-2} and \ref{0405-3}, we have $\om_{[p]}\in \dD$ and
$$\|\widetilde{T}_{n-1,\varphi,\vec{u}}\|_{A_{\om_{[p]}}^p\to A_\mu^q}
\approx \Big\|\sum_{j=1}^n u_jD_\vp^{(j)}\Big\|_{A_\om^p\to A_\mu^q}=\|T_{n,\varphi,\vec{u}}-u_0D_{\vp}^{(0)}\|_{A_\om^p\to A_\mu^q}.$$
Then, (\ref{0405-4})  and Triangle Inequality deduce
$$\|\widetilde{T}_{n-1,\varphi,\vec{u}}  \|_{A_{\om_{[p]}}^p\to A_\mu^q}\lesssim \|T_{n,\varphi,\vec{u}}\|_{A_\om^p\to A_\mu^q}.$$
Since $\om_{[p]}\in \dD$, using  Lemma \ref{0405-3} and (\ref{0405-4}) again, we obtain
\begin{eqnarray*}
\|u_1 D_\vp^{(1)}\|_{A_\om^p\to A_\mu^q}
\approx\|u_1 D^{(0)}_\vp\|_{A_{\om_{[p]}}^p\to A_\mu^q}
&\lesssim& \left\|\widetilde{T}_{n-1,\varphi,\vec{u}}\right\|_{A_{\om_{[p]}}^p\to A_\mu^q}
\lesssim \|T_{n,\varphi,\vec{u}}\|_{A_\om^p\to A_\mu^q}.
\end{eqnarray*}
Then,  mathematical induction deduces
$$ \|u_j D_\vp^{(j)}\|_{A_\om^p\to A_\mu^q}
\lesssim \|T_{n,\varphi,\vec{u}}\|_{A_\om^p\to A_\mu^q},\,\,\,\, j=2,3,\cdots,n,$$
and therefore
$$ \sum_{j=0}^n \|u_j D_\vp^{(j)}\|_{A_\om^p\to A_\mu^q}
\lesssim \|T_{n,\varphi,\vec{u}}\|_{A_\om^p\to A_\mu^q}.$$
Similarly, we have
 $$ \sum_{j=0}^n \|u_j D_\vp^{(j)}\|_{e,A_\om^p\to A_\mu^q}
\lesssim \|T_{n,\varphi,\vec{u}}\|_{e, A_\om^p\to A_\mu^q}\,\,\mbox{ when }\,\, 1\leq p\leq q<\infty.$$
Here, we omit the proof of the essential norm of $T_{n,\varphi,\vec{u}}:A_\om^p\to A_\mu^q$ when $q<p$. The above proof of this theorem, Theorem \ref{thB}, Lemmas \ref{0405-2} and \ref{0405-3} ensure that $T_{n,\varphi,\vec{u}}$ and $u_kD_{\vp}^{(k)}$ are all compact when $T_{n,\varphi,\vec{u}}$ is bounded.

It  remains to prove (\ref{0405-4}) and (\ref{0912-2}). To do this, let  $T_{n,\varphi,\vec{u}}:A_\om^p\to A_\mu^q$ be bounded and
 $\{\gamma_k\}_{k=0}^n$ be those in Lemma \ref{0331-1} for $M=1$ and large enough.
For any $\lambda\in\D$ and $k=0,1,\cdots,n$, let
$$f_{\lambda,\gamma_k}(z)=\left(\frac{1-|\lambda|^2}{1-\ol{\lambda}z}\right)^{\gamma_k}\frac{1}{\om(S(\lambda))^\frac{1}{p}}, ~~~z\in H(\D).$$
Then we have
\begin{align}\label{0326-1}
\|T_{n,\varphi,\vec{u}} f_{\lambda,\gamma_k}\|_{A_\mu^q}^q
= \int_\D \left|\sum_{j=0}^n \frac{(\gamma_k)_j(\ol{\lambda})^j u_j(z)}{(1-\ol{\lambda}\vp(z))^j}\right|^q
\left|\frac{1-|\lambda|^2}{1-\ol{\lambda}\vp(z)}\right|^ {q\gamma_k} \frac{1}{\om(S(\lambda))^{\frac{q}{p}}}\mu(z) dA(z).
\end{align}
Since $\{\gamma_k\}_{k=0}^n$ is increasing, when $k<n$, we obtain
\begin{align}\label{0330-1}
\left|\frac{1-|\lambda|^2}{1-\ol{\lambda}\vp(z)}\right|^ {q\gamma_n}
\leq 2^{q(\gamma_n-\gamma_k)}  \left|\frac{1-|\lambda|^2}{1-\ol{\lambda}\vp(z)}\right|^ {q\gamma_k}.
\end{align}
Thus, for all $k=0,1,2,\cdots,n$,
\begin{align} \label{0331-2}
\|T_{n,\varphi,\vec{u}} f_{\lambda,\gamma_k}\|_{A_\mu^q}^q
\gtrsim \int_\D \left|\sum_{j=0}^n \frac{(\gamma_k)_j(\ol{\lambda})^j u_j(z)}{(1-\ol{\lambda}\vp(z))^j}\right|^q
\left|\frac{1-|\lambda|^2}{1-\ol{\lambda}\vp(z)}\right|^ {q\gamma_n} \frac{1}{\om(S(\lambda))^{\frac{q}{p}}}\mu(z) dA(z).
\end{align}
Let  $\Delta_{j,k}=\gamma_k^{\frac{1}{2}-k}(\gamma_k)_j, j,k=0,1,\cdots,n$ and $A=(\Delta_{j,k})$.
By Ger$\check{\mbox{s}}$gorin's theorem, see \cite[Theorem 6.1.1]{HrJc1985} for example, $|\det(A)|>1$.
So, there exists a sequence $\{c_k\}_{k=0}^n$ such that
\begin{align}\label{0331-3}
A
\left(
        \begin{array}{c}
          c_0 \\
          c_1\\
          \vdots \\
          c_n\\
        \end{array}
\right)
=
\left(
\begin{array}{c}
  1 \\
  0\\
  \vdots \\
  0
\end{array}
\right).
\end{align}


{\bf Case (a). $1\leq p\leq q<\infty.$}  Using (\ref{0331-3}), it is easy to check that
{\small
\begin{align}
\|u_0D_\vp^{(0)} f_{\lambda,\gamma_n}\|_{A_\mu^q}^q
=&\int_\D \left|\left(\sum_{k=0}^n c_k \gamma_k^{\frac{1}{2}-k}(\gamma_k)_0\right)
u_0(z)f_{\lambda,\gamma_n}(\vp(z)) \right|^q \mu(z) dA(z)  \nonumber\\
=&\int_\D \left|\sum_{j=0}^n \left(
\sum_{k=0}^n c_k \gamma_k^{\frac{1}{2}-k}(\gamma_k)_j\right)
\left(\frac{(\ol{\lambda})^j u_j(z)}{(1-\ol{\lambda}\vp(z))^j}
f_{\lambda,\gamma_n}(\vp(z))\right)\right|^q \mu(z) dA(z)  \nonumber\\
\lesssim&
\sum_{k=0}^n  |c_k|^q \gamma_k^{\frac{q}{2}-kq} \int_\D \left|
\sum_{j=0}^n \left(
\frac{(\gamma_k)_j(\ol{\lambda})^j u_j(z)}{(1-\ol{\lambda}\vp(z))^j}
f_{\lambda,\gamma_n}(\vp(z))\right)\right|^q  \mu(z) dA(z). \label{0912-1}
\end{align}
}
Then, (\ref{0331-2}) implies
$$\|u_0D_\vp^{(0)}f_{\lambda,\gamma_n}\|_{A_\mu^q}^q
\lesssim \sum_{k=0}^n  |c_k|^q \gamma_k^{\frac{q}{2}-kq}\|T_{n,\varphi,\vec{u}} f_{\lambda,\gamma_k}\|_{A_\mu^q}^q
\lesssim \left(\sum_{k=0}^n  |c_k|^q \gamma_k^{\frac{q}{2}-kq}\right)\|T_{n,\varphi,\vec{u}} \|_{A_\om^p\to A_\mu^q}^q.$$
By Theorem \ref{thB}, we see that $u_0D_\vp^{(0)}:A_\om^p\to A_\mu^q$ is bounded
and
$$\|u_0D_\vp^{(0)}\|_{A_\om^p\to A_\mu^q}\lesssim   \|T_{n,\varphi,\vec{u}}\|_{A_\om^p\to A_\mu^q}.$$
If  $K:A_\om^p\to A_\mu^q$ is bounded, similarly to the proof of (\ref{0912-1}), we get
{\small
\begin{align}
&\|(u_0D_\vp^{(0)}-K)f_{\lambda,\gamma_n}\|_{A_\mu^q}^q    \nonumber\\
=&\int_\D \left|\left(\sum_{k=0}^n c_k \gamma_k^{\frac{1}{2}-k}(\gamma_k)_0\right)
\Bigg( u_0(z)f_{\lambda,\gamma_n}(\vp(z)) -(Kf_{\lambda,\gamma_n})(z)\Bigg)\right|^q \mu(z) dA(z)  \nonumber\\
=&\int_\D \left|\sum_{j=0}^n \left(
\sum_{k=0}^n c_k \gamma_k^{\frac{1}{2}-k}(\gamma_k)_j\right)
\left(\frac{(\ol{\lambda})^j u_j(z)}{(1-\ol{\lambda}\vp(z))^j}
f_{\lambda,\gamma_n}(\vp(z))-(Kf_{\lambda,\gamma_n})(z)\right)\right|^q \mu(z) dA(z)  \nonumber\\
\lesssim&
\sum_{k=0}^n  |c_k|^q \gamma_k^{\frac{q}{2}-kq} \int_\D \left|
\sum_{j=0}^n \left(
\frac{(\gamma_k)_j(\ol{\lambda})^j u_j(z)}{(1-\ol{\lambda}\vp(z))^j}
f_{\lambda,\gamma_n}(\vp(z))-(\gamma_k)_j(Kf_{\lambda,\gamma_n})(z)\right)\right|^q  \mu(z) dA(z). \label{0429-2}
\end{align}
}
Since
\begin{align*}
&\left|
\sum_{j=0}^n \left(\frac{(\gamma_k)_j(\ol{\lambda})^j u_j(z)}{(1-\ol{\lambda}\vp(z))^j}
f_{\lambda,\gamma_n}(\vp(z))-(r_k)_j(Kf_{\lambda,\gamma_n})(z) \right) \right|^q  \\
\lesssim &\left|
\sum_{j=0}^n \frac{(\gamma_k)_j(\ol{\lambda})^j u_j(z)}{(1-\ol{\lambda}\vp(z))^j}
f_{\lambda,\gamma_k}(\vp(z))-(Kf_{\lambda,\gamma_k})(z)  \right|^q \left|\frac{1-|\lambda|^2}{1-\ol{\lambda}\vp(z)}\right|^ {q(\gamma_n-\gamma_k)}\\
&+ \left|\frac{1-|\lambda|^2}{1-\ol{\lambda}\vp(z)}\right|^ {q(\gamma_n-\gamma_k)} |(Kf_{\lambda,\gamma_k})(z)|^q + \sum_{j=0}^n|(\gamma_k)_j|^q|(Kf_{\lambda,\gamma_n})(z)|^q ,
\end{align*}
by (\ref{0429-2})  we have
$$\|(u_0D_\vp^{(0)}-K)f_{\lambda,\gamma_n}\|_{A_\mu^q}^q
\lesssim \sum_{k=0}^n  |c_k|^q \gamma_k^{\frac{q}{2}-kq}
\left(\|(T_{n,\varphi,\vec{u}}-K) f_{\lambda,\gamma_k}\|_{A_\mu^q}^q
+\|K f_{\lambda,\gamma_k}\|_{A_\mu^q}^q+\|K f_{\lambda,\gamma_n}\|_{A_\mu^q}^q\right).$$
Then,  Triangle Inequality deduces
$$\|u_0D_\vp^{(0)}f_{\lambda,\gamma_n}\|_{A_\mu^q}^q
\lesssim \sum_{k=0}^n  |c_k|^q \gamma_k^{\frac{q}{2}-kq}
\left(\|(T_{n,\varphi,\vec{u}}-K) f_{\lambda,\gamma_k}\|_{A_\mu^q}^q
+\|K f_{\lambda,\gamma_k}\|_{A_\mu^q}^q+\|K f_{\lambda,\gamma_n}\|_{A_\mu^q}^q\right).$$
By Lemma \ref{0406-2}, we obtain
\begin{align*}
\limsup_{|\lambda|\to 1}\|u_0D_\vp^{(0)}f_{\lambda,\gamma_n}\|_{A_\mu^q}^q
\lesssim \sum_{k=0}^n  \limsup_{|\lambda|\to 1}
\|(T_{n,\varphi,\vec{u}}-K) f_{\lambda,\gamma_k}\|_{A_\mu^q}^q
\lesssim \|T_{n,\varphi,\vec{u}}-K\|_{A_\om^p\to A_\mu^q}^q.
\end{align*}
By Theorem \ref{thB} and the arbitrary of $K$, we have
$$\|u_0D_\vp^{(0)}\|_{e, A_\om^p\to A_\mu^q} \lesssim \|T_{n,\varphi,\vec{u}}\|_{e,A_\om^p\to A_\mu^q}.$$

{\bf Case (b): $1\leq q<p<\infty.$} Let $\{\lambda_i\}_{i=1}^\infty$ be those $\{\xi_{j,l}^k\}$ in \cite[Thoerem 2]{PjRjSk2021jga}(also see \cite[Theorem 3.2]{ZxXlFhLj2014amsc})
and  $r_k(t)$  be  Rademacher functions. We have the following statements:
\begin{enumerate}
  \item[(bi).] For any $a=\{a_i\}_{i=1}^\infty\in l^p$, $\|g_{a,\gamma_k,t}\|_{A_\om^p}\lesssim \|\{a_i\}_{i=1}^\infty\|_{l^p}$,  where
           $$g_{a,\gamma_k,t}(z)=\sum_{i=1}^\infty a_i r_i(t)f_{\lambda_i,\gamma_k}(z),\,\,k=0,1,\cdots,n.  $$
  \item[(bii).] For any $g\in A_\om^{\frac{p}{2}}$, there exists $\{b_i\}_{i=1}^{\infty}\in l^\frac{p}{2}$ such that
  \begin{align}\label{0912-3}
  g(z)=\sum_{i=1}^\infty b_i\left(\frac{1-|\lambda_i|^2}{1-\ol{\lambda_i}z}\right)^{2\gamma_n}\frac{1}{\om(S(\lambda_i))^{\frac{2}{p}}},
\,\,\,\,\|g\|_{A_\om^\frac{p}{2}}\approx \|\{b_i\}\|_{l^\frac{p}{2}}.
\end{align}
\end{enumerate}

By Fubini's theorem, Khinchin's inequality,  (\ref{0326-1}) and (\ref{0330-1}), we have
 \begin{align*}
&\int_0^1 \|T_{n,\varphi,\vec{u}} g_{a,\gamma_k,t}\|_{A_\mu^q}^qdt\\
=&\int_\D\int_0^1 \left|\sum_{i=1}^\infty a_i r_i(t) (T_{n,\varphi,\vec{u}} f_{\lambda_i,\gamma_k})(z) \right|^q   dt  \mu(z) dA(z)\\
\approx& \int_\D\left( \sum_{i=1}^\infty |a_i|^2|(T_{n,\varphi,\vec{u}} f_{\lambda_i,\gamma_k})(z)|^2\right)^\frac{q}{2}  \mu(z) dA(z)\\
=&
\int_\D\left( \sum_{i=1}^\infty \left| a_i
\left(\sum_{j=0}^n \frac{(\gamma_k)_j(\ol{\lambda_i})^j u_j(z)}{(1-\ol{\lambda_i}\vp(z))^j}  \right)
\left(\frac{1-|\lambda_i|^2}{1-\ol{\lambda_i}\vp(z)}\right)^ {\gamma_k} \frac{1}{\om(S(\lambda_i))^{\frac{1}{p}}}
\right|^2 \right)^\frac{q}{2}  \mu(z) dA(z)
\end{align*}
\begin{align*}
\gtrsim&
\int_\D\left( \sum_{i=1}^\infty \left|
a_i
\left(\sum_{j=0}^n \frac{(\gamma_k)_j(\ol{\lambda_i})^j u_j(z)}{(1-\ol{\lambda_i}\vp(z))^j}  \right)
\left(\frac{1-|\lambda_i|^2}{1-\ol{\lambda_i}\vp(z)}\right)^ {\gamma_n} \frac{1}{\om(S(\lambda_i))^{\frac{1}{p}}}
\right|^2
\right)^\frac{q}{2}  \mu(z) dA(z)
\\
=& \int_\D \left( \sum_{i=1}^\infty  |A_{k,i}(z)|^2\right)^\frac{q}{2}\mu(z) dA(z).
\end{align*}
Here $$
A_{k,i}(z)=a_i
\left(\sum_{j=0}^n \frac{(\gamma_k)_j(\ol{\lambda_i})^j u_j(z)}{(1-\ol{\lambda_i}\vp(z))^j}  \right)
\left(\frac{1-|\lambda_i|^2}{1-\ol{\lambda_i}\vp(z)}\right)^ {\gamma_n} \frac{1}{\om(S(\lambda_i))^{\frac{1}{p}}}.
$$
Thus, by the statement (bi),
\begin{align*}
 \int_\D \left( \sum_{i=1}^\infty  |A_{k,i}(z)|^2\right)^\frac{q}{2}\mu(z) dA(z)
 &\lesssim \int_0^1 \|T_{n,\varphi,\vec{u}} g_{a,\gamma_k,t}\|_{A_\mu^q}^qdt\\
 &\lesssim  \|T_{n,\varphi,\vec{u}}\|_{A_\om^p\to A_\mu^q}^q\int_0^1  \| g_{a,\gamma_k,t}\|_{A_\om^p}^qdt\\
 &
 \lesssim  \|T_{n,\varphi,\vec{u}}\|_{A_\om^p\to A_\mu^q}^q  \|\{a_i\}\|_{l^p}^q.
\end{align*}
Recalling  that $\{c_k\}_{k=0}^n$ was decided by (\ref{0331-3}), we have
 \begin{align}
&\int_\D\left( \sum_{i=1}^\infty \left|a_i \right|^2
\left|(u_0D_\vp^{(0)} f_{\lambda_i,\gamma_n})(z)\right|^2
\right)^\frac{q}{2}  \mu(z) dA(z)\nonumber\\
=& \int_\D \left( \sum_{i=1}^\infty  \left|\sum_{k=0}^n c_k \gamma_k^{\frac{1}{2}-k}A_{k,i}(z)\right|^2\right)^\frac{q}{2}\mu(z) dA(z)  \nonumber\\
\lesssim& \int_\D \left( \sum_{i=1}^\infty \sum_{k=0}^n  |A_{k,i}(z)|^2\right)^\frac{q}{2}\mu(z) dA(z)  \nonumber\\
\lesssim&  \sum_{k=0}^n\int_\D \left( \sum_{i=1}^\infty  |A_{k,i}(z)|^2\right)^\frac{q}{2}\mu(z) dA(z) \nonumber
\\
\lesssim&  \|T_{n,\varphi,\vec{u}}\|_{A_\om^p\to A_\mu^q}^q \|\{a_i\}\|_{l^p}^q.  \label{0330-2}
\end{align}
For any $g\in A_\om^{\frac{p}{2}}$, by the statement (bii), there exists $\{b_i\}_{i=1}^\infty\in l^\frac{p}{2}$ such that (\ref{0912-3}) holds.
Let $a_i=b_i^\frac{1}{2}$.
So, $\|\{b_i\}\|_{l^\frac{p}{2}}=\|\{a_i\}\|_{l^p}^2$.
By (\ref{0330-2}), we get
\begin{align*}
\|u_0^2 D_\vp^{(0)} g\|_{A_\mu^\frac{q}{2}}^\frac{q}{2}
\leq& \int_\D\left( \sum_{i=1}^\infty \left|a_i\right|^2
\left|(u_0 D_\vp^{(0)} f_{\lambda_i,\gamma_n})(z)\right|^2
\right)^\frac{q}{2}  \mu(z) dA(z)\\
\lesssim& \|T_{n,\varphi,\vec{u}}\|_{A_\om^p\to A_\mu^q}^q \|\{a_i\}\|_{l^p}^q
=\|T_{n,\varphi,\vec{u}}\|_{A_\om^p\to A_\mu^q}^q \|\{b_i\}\|_{l^\frac{p}{2}}^\frac{q}{2}\\
\approx &\|T_{n,\varphi,\vec{u}}\|_{A_\om^p\to A_\mu^q}^q \|g\|_{A_\om^\frac{p}{2}}^\frac{q}{2}.
\end{align*}
That is to say, $u_0^2 D_\vp^{(0)}:A_\om^\frac{p}{2}\to A_\mu^\frac{q}{2}$ is bounded.
By  Theorem \ref{thB}, $u_0D_\vp^{(0)}:A_\om^p\to A_\mu^q$ is bounded and
$$\|u_0 D_\vp^{(0)}\|_{A_\om^p\to A_\mu^q}\lesssim \|T_{n,\varphi,\vec{u}}\|_{A_\om^p\to A_\mu^q}.$$
The proof is complete.
\end{proof}

\section{Proof of Theorem \ref{th3}}

\begin{proof}[Proof of Theorem \ref{th3}]
For brief, let $\upsilon(a)=\om(S(a))^\frac{1}{p}$. By Chapter 1 in \cite{PjRj2014book}, there exist $\alpha,\beta>0$ such that
$\frac{\upsilon(t)}{(1-t)^\alpha}$ and $\frac{\upsilon(t)}{(1-t)^\beta}$ are essentially increasing and essentially decreasing on $[0,1)$, respectively.
By (4.8) in \cite{Zhu1}, there is a constant $M_0>4$, whenever $\beta(z,w)<\frac{1}{2}$,
\begin{align*}
\frac{2}{\sqrt{M_0}}<\frac{\upsilon(z)}{\upsilon(w)}<\frac{\sqrt{M_0}}{2}.
\end{align*}
By Lemma \ref{0331-1}, we can choose $\{\gamma_j\}_{j=0}^n$ large enough  such that (\ref{0406-3}) holds for $M=M_0$.
By Proposition 4.5 in \cite{Zhu1}, there exist $0<\varepsilon<\frac{1}{4}<R<1$ such that whenever $|z|>R$, $\xi,\eta\in D(z,\varepsilon)$ and $0\leq k,j\leq n$,
\begin{align}\label{0516-1}
\frac{1}{\sqrt{M_0}}<|\xi|^n, |\eta|^n<1,
\end{align}
\begin{align*}
\frac{1}{2}<
\frac{(1-|\xi|^2)^{\gamma_j}(1-|\eta|^2)^k}{|1-\ol{\xi}\eta|^{\gamma_j+k}}<2,
\end{align*}
and then,
\begin{align}\label{0218-1}
\frac{1}{\sqrt{M_0}}<
\frac{\upsilon(\eta)}{\upsilon(\xi)}
\frac{(1-|\xi|^2)^{\gamma_j}(1-|\eta|^2)^k}{|1-\ol{\xi}\eta|^{\gamma_j+k}}<\sqrt{M_0}.
\end{align}

{\bf Case (a):} $|\lambda_J|>R$ and $\sup\limits_{0\leq j\leq n}\beta(\lambda_j,\lambda_J)<\varepsilon$.
Let $f_{\lambda,\gamma}(z)=\left(\frac{1-|\lambda|^2}{1-\ol{\lambda}z}\right)^\gamma\frac{1}{\upsilon(\lambda)}$ and
$$f_{\Lambda,J}(z)=\sum_{k=0}^n b_k \gamma_k^{\frac{1}{2}-k}f_{\lambda_k,\gamma_k}(z).$$
Then, the  equations (\ref{IC}) can be written as  $Ab=\delta_J$, in which
$$a_{jk}=\gamma_k^{\frac{1}{2}-k}(\gamma_k)_j\ol{\lambda_k}^j
\frac{\upsilon(\lambda_j)}{\upsilon(\lambda_k)}
\frac{(1-|\lambda_k|^2)^{\gamma_k}(1-|\lambda_j|^2)^j}{(1-\ol{\lambda_k}\lambda_j)^{\gamma_k+j}}, $$
and
$$
b=(b_0,b_1,\cdots,b_n)^T,\,\,\,\,\,\,
\delta_J=(\delta_{0J},\,\, \delta_{1J}, \cdots, \,\, \delta_{nJ})^T,  \,\,\,\,\, A=(a_{jk}).$$
By (\ref{0406-3}),  (\ref{0516-1}) and (\ref{0218-1}), for any $0\leq j\leq n$,
\begin{align*}
1+\sum_{k\neq j,0\leq k\leq n}|a_{jk}|
< 1+\sqrt{M_0}\sum_{k\neq j,0\leq k\leq n} \gamma_k^{\frac{1}{2}-k} (\gamma_k)_j
< \frac{1}{M_0}\gamma_j^{\frac{1}{2}-j}(\gamma_j)_j
<|a_{jj}|.
\end{align*}
By Ger$\check{\mbox{s}}$gorin's theorem, $|\det(A)|>1$.
Meanwhile, since all   elements in $A$ are bounded independent of $\{\lambda_j\}_{j=0}^n$, the elements of adjoint matrix $A^*$ of $A$ are also bounded. So, there exists a constant $M_1$ independent of  $\{\lambda_j\}_{j=0}^n$ such that
$b=A^{-1}\delta_J$ and $\sum_{k=0}^n |b_k|<M_1$.
So,  $\|f_{\Lambda,J}\|_{A_\om^p}\lesssim M_1 $. For a fixed $J\in\{0,1,\cdots,n\}$ and any given $\delta\in (0,1)$ and $0\leq j\leq n$, set
$$E_{j,\delta}=\left\{\lambda_j:\Lambda=\{\lambda_k\}_{k=0}^n\subset \D, |\lambda_J|>\delta,
 \sup_{0\leq i\leq n}\beta(\lambda_i,\lambda_J)<\varepsilon\right\}.$$
As $\delta$ approaches $1$, $E_{j,\delta}$ approaches the boundary of $\D$.
Therefore, the functions $\{f_{\lambda_j,\gamma_j}\}_{\lambda_j\in E_{j,\delta}}$  converge to $0$   uniformly on any compact  subset of $\D$.
By the arbitrary of $0\leq j\leq n$ and $\sum_{k=0}^n |b_k|<M_1$,
the functions $\{f_{\Lambda,J}\}$ converge to $0$   uniformly on any compact  subset of $\D$ as $|\lambda_J|$ approaches $1$.

{\bf Case (b):} $|\lambda_J|>R$ and $\sup\limits_{0\leq k\leq n}\beta(\lambda_k,\lambda_J)\geq \varepsilon$.
Let $\varepsilon^\p=\frac{\varepsilon}{n+1}$.
By Pigeonhole Principle, there exists $L\in\{0,1,\cdots,n\}$ such that
$$\Big\{\lambda_j:L\varepsilon^\p\leq \beta(\lambda_j,\lambda_J)<(L+1)\varepsilon^\p, j=0,1,2,\cdots,n\Big\}$$
is empty.
Set $\Lambda_1=\{z_k\}_{k=0}^n$, where
$$z_k=\left\{
\begin{array}{cc}
  \lambda_k, & \mbox{ if }\,\, \beta(\lambda_k,\lambda_J)<L\varepsilon^\p, \\
  \lambda_J, & \mbox{ if }\,\, \beta(\lambda_k,\lambda_J)\geq (L+1)\varepsilon^\p.
\end{array}
\right.
$$
By the proof above,  we have a function $f_{\Lambda_1,J}$ such $\|f_{\Lambda_1,J}\|_{A_\om^p}\lesssim M_1$ and
\begin{align}\label{0517-1}
f_{\Lambda_1,J}^{(j)}(z_j)=\frac{\delta_{jJ}}{(1-|z_j|^2)^{j}\upsilon(z_j)},\,\,0\leq j\leq n.
\end{align}
Let $w_1,w_2,\cdots,w_{n^\p}$ be the elements of $\{\lambda_k\}_{k=0}^n \backslash \{z_k\}_{k=0}^n$.
By Lemma \ref{lm3.5}, there exist a constant $M_2$,  independent of $\{\lambda_j\}_{j=0}^n$,  $J$,  and $h\in H(\D)$ such that
for all $0\leq k,j\leq n$, $0\leq i\leq n^\p$, we have
\begin{align}\label{0218-3}
\|h\|_{H^\infty}<M_2,\,\,\,\,  h^{(k)}(w_i)=0, h^{(k)}(z_j)=\delta_{0k}.
\end{align}
Letting $f_{\Lambda,J}=f_{\Lambda_1,J}h$, for all $j=0,1,\cdots,n$, we have
$$f_{\Lambda,J}^{(j)}(z)=\sum_{k=0}^j C_j^k f_{\Lambda_1,J}^{(k)}(z)h^{(j-k)}(z).$$
When $\lambda_j=z_j$, by (\ref{0517-1}) and (\ref{0218-3}), we have
$$f_{\Lambda,J}^{(j)}(\lambda_j)=f_{\Lambda,J}^{(j)}(z_j)=f_{\Lambda_1,J}^{(j)}(z_j)
=\frac{\delta_{jJ}}{(1-|z_j|^2)^{j}\upsilon(z_j)}
=\frac{\delta_{jJ}}{(1-|\lambda_j|^2)^{j}\upsilon(\lambda_j)};$$
otherwise,  $\lambda_j$ could not be $\lambda_J$ and there exists $w_i$ such that $\lambda_j=w_i$.  By (\ref{0218-3}),
$$f_{\Lambda,J}^{(j)}(\lambda_j)=f_{\Lambda,J}^{(j)}(w_i)=0
=\frac{\delta_{jJ}}{(1-|\lambda_j|^2)^{j}\upsilon(\lambda_j)}.$$
So, $f_{\Lambda,J}=f_{\Lambda_1,J}h$ is the desired and $\|f_{\Lambda,J}\|_{A_\om^p}\lesssim  M_1 M_2$.
Moreover, by the proof of case (a) and $\|h\|_{H^\infty}<M_2$, the functions $\{f_{\Lambda,J}\}$ converge to $0$ uniformly on any compact subset of $\D$ when $|\lambda_J|$ approaches $1$.

{\bf Case (c):}  $|\lambda_J|\leq R$. By Lemma \ref{lm3.2}, there is a constant $M_3$,
for all $\{\lambda_j\}_{j=0}^n$ and $0\leq J\leq n$, there is a function $p\in H^\infty$ such that $\|p\|_{H^\infty}<M_3$  and $p^{(j)}(\lambda_j)=\frac{1}{2}\delta_{jJ}$.
Then $f(z)=\frac{2p(z)}{(1-|\lambda_J|^2)^J\upsilon(\lambda_J)}$ is the desired.

 By the above proof, we see that the functions $\{f_{\Lambda,J}\}$ converge to $0$ uniformly on compact subsets of $\D$ as $|\lambda_J|\to 1$.
The proof is complete.
\end{proof}

\section{Proof of Theorem \ref{th2}}

\begin{proof}[Proof of Theorem \ref{th2}]  First we consider the   norm of  $\T_{n,\vec{\varphi},\vec{u}}:A_\om^p\to H^\infty$.  By the assumption and Lemma 3 in \cite{ZxDj2019mia}, we see that  for any $f\in A_\om^p$ and $k=0,1,\cdots,n+1$,
\begin{align}\label{0407-3}
|f^{(k)}(z)|\lesssim \frac{\|f\|_{A_\om^p}}{(1-|z|^2)^k\om(S(z))^\frac{1}{p}}, ~~z\in\D.
\end{align}
 After a   calculation, by (\ref{0407-3})  we get
\begin{align*}
\|u_kD_{\vp_k}^{(k)}\|_{A_\om^p\to H^\infty} \lesssim \sup_{z\in\D} \frac{|u_k(z)|}{(1-|\vp_k(z)|^2)^k\om(S(\vp_k(z)))^\frac{1}{p}}.
\end{align*}
Therefore,
$$\|\T_{n,\vec{\varphi},\vec{u}}\|_{A_\om^p\to H^\infty}\lesssim \sum_{k=0}^n \sup_{z\in\D} \frac{|u_k(z)|}{(1-|\vp_k(z)|^2)^k\om(S(\vp_k(z)))^\frac{1}{p}}.$$

Conversely, suppose $\T_{n,\vec{\varphi},\vec{u}}:A_\om^p\to H^\infty$ is bounded.
By Theorem \ref{th3}, there exists $M'$ such that, for any $\lambda\in\D$ and $J\in\{0,1,\cdots,n\}$,  there is a function $f_{\Lambda,J}\in A_\om^p$ satisfying
$\|f_{\Lambda,J}\|_{A_\om^p}\leq M'$ and
\begin{align*}
f_{\Lambda,J}^{(j)}(\vp_j(\lambda))=\frac{\delta_{jJ}}{(1-|\vp_j(\lambda)|^2)^j\om(S(\vp_j(\lambda)))^\frac{1}{p}},\,\,j=0,1,\cdots,n.
\end{align*}
Here, $\Lambda=\{\vp_k(\lambda)\}_{k=0}^n$. Then we have
\begin{align*}
\Big|\frac{u_J(\lambda)}{(1-|\vp_J(\lambda)|^2)^J\om(S(\vp_J(\lambda)))^{\frac{1}{p}}} \Big|
=|(\T_{n,\vec{\varphi},\vec{u}} )f_{\Lambda,J}(\lambda)|
\leq \|\T_{n,\vec{\varphi},\vec{u}} f_{\Lambda,J}\|_{H^\infty}
\lesssim \|\T_{n,\vec{\varphi},\vec{u}}\|_{A_\om^p\to H^\infty}.
\end{align*}
Therefore,
$$\sum_{k=0}^n \sup_{z\in\D} \frac{|u_k(z)|}{(1-|\vp_k(z)|^2)^k\om(S(\vp_k(z)))^\frac{1}{p}}  \lesssim \|\T_{n,\vec{\varphi},\vec{u}}\|_{A_\om^p\to H^\infty},$$
as desired.

Next, we estimate the essential norm of  $\T_{n,\vec{\varphi},\vec{u}}:A_\om^p\to H^\infty$.   Suppose $\T_{n,\vec{\varphi},\vec{u}}:A_\om^p\to H^\infty$ is bounded. By the above proof, we see that
$$\sup_{z\in\D}|u_k(z)|<\infty, \,\,k=0,1,2,\cdots,n.$$
%
For any given $r\in[0,1)$, let $(K_r f)(z)=f(rz)$. By Lemma \ref{0406-2}, it is compact on $A_\om^p$.
So, $u_kD_{\vp_k}^{(k)}K_r:A_\om^p\to H^\infty$ is also compact.
Let $0<\delta<1$.
For $f\in A_\om^p$, we have
\begin{align*}
\|u_kD_{\vp_k}^{(k)} f-u_kD_{\vp_k}^{(k)}K_r f\|_{H^\infty}
&\leq \left(\sup_{|\vp_k(z)|\leq \delta}  +\sup_{\delta<|\vp_k(z)|<1 }\right)|u_k(z)D_{\vp_k}^{(k)}f(z)-r^ku_k(z)D_{r\vp_k}^{(k)}f(z)|\\
&\leq  \sup_{|\vp_k(z)|\leq \delta} |u_k(z)D_{\vp_k}^{(k)}f(z)-r^ku_k(z)D_{r\vp_k}^{(k)}f(z)|\\
&  ~~~+
\sup_{\delta<|\vp_k(z)|<1 } |u_k(z)D_{\vp_k}^{(k)}f(z)-r^ku_k(z)D_{r\vp_k}^{(k)}f(z)|\\
&:= I+II.
\end{align*}
By (\ref{0407-3}), there exists a constant $M$ independent of $f,r,\delta,u_k,\vp_k$ such that
\begin{align*}
I\leq& \sup_{|\vp_k(z)|\leq \delta} (1-r^k)|u_k(z)f^{(k)}(r\vp_k(z))|
+\sup_{|\vp_k(z)|\leq \delta}|u_k(z)|\left|\int_{r\vp_k(z)}^{\vp_k(z)} f^{(k+1)}(\xi)d\xi\right|  \\
\leq&
(1-r^k+1-r)\sup_{|\vp_k(z)|\leq \delta}  \frac{M|u_k(z)|\|f\|_{A_\om^p}}{(1-|\vp_k(z)|^2)^{k+1}\om(S(\vp_k(z)))^\frac{1}{p}}
\end{align*}
and
\begin{align*}
II\leq& \sup_{\delta<|\vp_k(z)|<1 }|u_k(z)f^{(k)}(\vp_k(z))|   +  \sup_{\delta<|\vp_k(z)|<1 }|u_k(z)f^{(k)}(r\vp_k(z))|    \\
\leq& \sup_{\delta<|\vp_k(z)|<1} \frac{M|u_k(z)|\|f\|_{A_\om^p}}{(1-|\vp_k(z)|^2)^k\om(S(\vp_k(z)))^\frac{1}{p}}.
\end{align*}
So, for any given $\varepsilon>0$, we can choose $r\in(0,1)$ such that
 \begin{align*}
\|u_kD_{\vp_k}^{(k)} f-u_kD_{\vp_k}^{(k)}K_r f\|_{H^\infty}
\leq \varepsilon \|f\|_{A_\om^p} +\sup_{\delta<|\vp_k(z)|<1} \frac{M|u_k(z)|\|f\|_{A_\om^p}}{(1-|\vp_k(z)|^2)^k\om(S(\vp_k(z)))^\frac{1}{p}}.
\end{align*}
Letting $\varepsilon\to 0$ and $\delta\to 1$, we have
$$\|u_kD_{\vp_k}^{(k)}\|_{e,A_\om^p\to H^\infty} \lesssim \limsup_{|\vp_k(z)|\to 1} \frac{|u_k(z)|}{(1-|\vp_k(z)|^2)^k\om(S(\vp_k(z)))^\frac{1}{p}}.$$
Therefore,
$$\|\T_{n,\vec{\varphi},\vec{u}}\|_{e, A_\om^p\to H^\infty}\lesssim \sum_{k=0}^\infty
\limsup_{|\vp_k(z)|\to 1} \frac{|u_k(z)|}{(1-|\vp_k(z)|^2)^k\om(S(\vp_k(z)))^\frac{1}{p}}.$$

Finally, we prove that
\begin{align*}
\sum_{j=0}^n\limsup_{|\vp_j(\lambda)|\to 1}
\frac{|u_j(\lambda)|}{(1-|\vp_j(\lambda)|^2)^j\om(S(\vp_j(\lambda)))^{\frac{1}{p}}} \lesssim \|\T_{n,\vec{\varphi},\vec{u}}\|_{e,A_\om^p\to H^\infty}.
\end{align*}
Suppose $K:A_\om^p\to H^\infty$ is compact.
By Theorem \ref{th3}, there exists $M'$ such that, for any $\lambda\in\D$ and $J\in\{0,1,\cdots,n\}$,  there is a function $f_{\Lambda,J}\in A_\om^p$ satisfying
$\|f_{\Lambda,J}\|_{A_\om^p}\leq M'$ and
\begin{align*}
f_{\Lambda,J}^{(j)}(\vp_j(\lambda))=\frac{\delta_{jJ}}{(1-|\vp_j(\lambda)|^2)^j\om(S(\vp_j(\lambda)))^\frac{1}{p}},\,\,j=0,1,\cdots,n.
\end{align*}
Here, $\Lambda=\{\vp_k(\lambda)\}_{k=0}^n$.
Then we have
\begin{align*}
\Big|\frac{u_J(\lambda)}{(1-|\vp_J(\lambda)|^2)^J\om(S(\vp_J(\lambda)))^{\frac{1}{p}}} -(Kf_{\Lambda,J})(\lambda)\Big|
&=|(\T_{n,\vec{\varphi},\vec{u}}-K)f_{\Lambda,J}(\lambda)|\\
&\leq \|\T_{n,\vec{\varphi},\vec{u}} f_{\Lambda,J}-Kf_{\Lambda,J}\|_{H^\infty}\\
&\lesssim \|\T_{n,\vec{\varphi},\vec{u}}-K\|_{A_\om^p\to H^\infty}.
\end{align*}
By  Lemma \ref{0406-2} and Theorem \ref{th3}, $\|Kf_{\Lambda,J}\|_{H^\infty}\to 0$ as $|\vp_J(\lambda)|\to 1$.
Thus,
\begin{align*}
\limsup_{|\vp_J(\lambda)|\to 1}\frac{|u_J(\lambda)|}{(1-|\vp_J(\lambda)|^2)^J\om(S(\vp_J(\lambda)))^{\frac{1}{p}}} \lesssim \|\T_{n,\vec{\varphi},\vec{u}}-K\|_{A_\om^p\to H^\infty}.
\end{align*}
Since $K$ and $J$ are arbitrary, we have
\begin{align*}
\sum_{j=0}^n\limsup_{|\vp_j(\lambda)|\to 1}
\frac{|u_j(\lambda)|}{(1-|\vp_j(\lambda)|^2)^j\om(S(\vp_j(\lambda)))^{\frac{1}{p}}} \lesssim \|\T_{n,\vec{\varphi},\vec{u}}\|_{e,A_\om^p\to H^\infty}.
\end{align*}
The proof is complete.
\end{proof}

\end{document}